\newcommand{\I}{\mathcal{I}}
\newcommand{\J}{\mathcal{J}}
\newcommand{\Hdiv}[1]{H(\mathrm{div},{#1})}
\def\d{\partial}
\newcommand{\RRR}{\mathbb{R}}
\newcommand{\eq}{\boldsymbol{\varepsilon}_{\bf q}}
\newcommand{\eqhat}{\boldsymbol{\varepsilon}_{\widehat{q}}}
\newcommand{\eu}{\varepsilon_u}
\newcommand{\el}{\varepsilon_{\widehat u}}
\newcommand{\vPhi}{\boldsymbol{\varPhi}}
\newcommand{\vTheta}{\varTheta}
\newcommand{\vPsi}{\varPsi}
\newcommand{\Eh}{\mathcal{E}_h}
\newcommand{\oh}{{\mathcal{T}_h}}
\newcommand{\doh}{{\partial \oh}}
\newcommand{\Th}{{\oh}}
\newcommand{\qhat}{\widehat{\boldsymbol{q}}}
\newcommand{\uhat}{\widehat{u}}
\newcommand{\ip}[1]{\langle {#1} \rangle}
\newcommand{\pv}{\boldsymbol{\varPi}_{\!\scriptscriptstyle{V}}}
\newcommand{\pw}{\varPi_{\!\scriptscriptstyle{W}}}
\newcommand{\Pw}{P_{\scriptscriptstyle{W}}}
\newcommand{\Pm}{P_{\scriptscriptstyle{M}}}
\newcommand{\vq}{{\boldsymbol{q}}}
\newcommand{\vr}{{\boldsymbol{r}}}
\newcommand{\vn}{{\boldsymbol{n}}}
\newcommand{\vv}{{\boldsymbol{v}}}
\newcommand{\vw}{{\boldsymbol{w}}}
\newcommand{\dive}{\mathop{\nabla}\cdot\,}
\newcommand{\pol}{\mathcal{P}}
\title
{Convergence and superconvergence analyses of HDG  methods  for time fractional diffusion problems \thanks{Support of the King Fahd University of Petroleum and Minerals
(KFUPM) through
 the project FT131011 is gratefully acknowledged.}}
\author{
 Kassem  Mustapha \and {Maher Nour} \and Bernardo Cockburn }
\institute{K. Mustapha ~\email{kassem@kfupm.edu.sa}
           \and
 {M. Nour}~ \email{mnoor@kfupm.edu.sa} \at
 Department of Mathematics and Statistics, KFUPM, Saudi Arabia
  \and B. Cockburn ~\email{cockburn@math.umn.edu}
  \at School of Mathematics, University of Minnesota, USA }
\date{\today}
\begin{document}
\maketitle
\begin{abstract}
We study  the hybridizable discontinuous Galerkin (HDG) method for the spatial discretization of time fractional diffusion models with Caputo derivative of order $0<\alpha<1$. For each time $t \in [0,T]$, the HDG approximations are taken to be piecewise polynomials of
degree $k\ge0$ on the  spatial domain~$\Omega$, the approximations to the exact solution  $u$ in the $L_\infty\bigr(0,T;L_2(\Omega)\bigr)$-norm   and to $\nabla u$  in the $L_\infty\bigr(0,T;{\bf L}_2(\Omega)\bigr)$-norm are
 proven to converge with the rate $h^{k+1}$ provided that $u$ is sufficiently regular, where $h$ is the maximum diameter of the
elements of the mesh.  Moreover, for $k\ge1$, we obtain a superconvergence result which allows us to
compute, in an elementwise manner, a new approximation for $u$
converging with a rate  $h^{k+2}$ (ignoring the logarithmic factor), for quasi-uniform spatial meshes. Numerical experiments
validating the theoretical results are displayed.
\end{abstract}

\keywords{
 Anomalous diffusion \and Time fractional \and  Discontinuous Galerkin methods \and Hybridization \and Convergence analysis}



\section{Introduction}
In this paper, we study the method resulting after using exact integration in time and a  hybridizable discontinuous Galerkin (HDG) method for the spatial discretization of the following time fractional diffusion model problem:
\begin{subequations}
\label{eq: time-fractional diffusion models}
\begin{alignat}{2}\label{time-fractional diffusion models-a}
&^{c}{\rm D}^{1- \alpha}u(x,t)- \Delta u(x,t) = f(x,t)
&&\quad\mbox{ for } (x,t)\in \Omega\times (0,T],
\\ \label{time-fractional diffusion models-b}
&u(x,t)= g(x) &&\quad\mbox{ for } (x,t)\in \partial\Omega\times (0,T],
\end{alignat}
\end{subequations}
with $u(x,0)=u_0(x)$ for $x\in \Omega,$
where $\Omega$ is a convex polyhedral domain of $\mathbb{R}^d$ ($d=1,2,3$) with boundary $\partial \Omega$, $f$, $g$ and $u_0$ are given
functions assumed to be sufficiently regular such that the solution $u$ of \eqref{eq: time-fractional diffusion models} is in the space $W^{1,1}(0,T;H^2(\Omega))$, see  the regularity analysis in \cite{McLean2010} (further regularity assumptions will be imposed later),  and $T>0$ is a fixed
but arbitrary value.
Here, $^{c}{\rm D}^{1- \alpha}$  denotes time fractional Caputo derivative of order $\alpha$ defined by
\begin{equation}  ^{c}{\rm D}^{1-\alpha} v(t):= \I^{\alpha} v' (t):=\int_0^t\omega_{\alpha}(t-s)v'(s)\,ds\quad {\rm with}~~
0<\alpha<1,
\end{equation}
where $v'$ denotes  the time derivative of the function $v$ and $\I^\alpha$ is the Riemann--Liouville (time) fractional integral operator; with  $\omega_{\alpha}(t):=\frac{t^{\alpha-1}}{\Gamma(\alpha)}$ and $\Gamma$ being  the  gamma function.
 
 In this work, we investigate  a high-order accurate numerical method for the space
discretization for problem \eqref{eq: time-fractional diffusion models}. Using exact integration in time, we propose to deal with the accuracy issue by developing   a high-order HDG method that allows for locally varying spatial meshes and   approximation orders which are beneficial to handle problems with low regularity.    The HDG methods were introduced in
\cite{CockburnGopalakrishnanLazarov09} in the framework of steady-sate diffusion which share with the classical (hybridized version of the)
mixed  finite element methods their remarkable convergence and superconvergence
properties, \cite{CockburnQiuShi}, as well as the
way in which they can be efficiently implemented,
\cite{KirbySherwinCockburn}. They provide approximations that are more accurate
than the ones given by any other DG method for second-order elliptic problems
\cite{NguyenPeraireCockburnIcosahom09}.
 In \cite{CockburnMustapha2014}, a similar method was studied for  the fractional subdiffusion problem:
\begin{equation}
\label{eq: reimann} u'(x,t) - {\rm D}^{1-\alpha} \Delta u(x,t)  = f(x,t) \quad\mbox{ for } (x,t)\in \Omega\times (0,T],
\end{equation}
where  ${\rm D}^{1-\alpha}$ is the Riemann--Liouville fractional time derivative operator,  \begin{equation}\label{eq: fractional derivative}
{\rm D}^{1-\alpha} v(t):=( \I^\alpha v(t))'=\frac{\partial}{\partial t}\int_0^t \omega_\alpha(t-s)\,v(s)\,ds\,.
 \end{equation}
 (For other numerical methods of \eqref{eq: reimann}, see  \cite{ChenLiuAnhTurner2011,CuestaLubichPalencia2006,Cui2009,McLeanMustapha2014,Mustapha2014,MustaphaAlMutawa,MustaphaMcLean2012,MustaphaMcLean2013,YusteQuintana2009} and related references therein.) When $f \equiv 0$ (that is, homogeneous case), 
the two representations \eqref{time-fractional diffusion models-a} and \eqref{eq: reimann} are different ways of writing the same equation, as they are equivalent under reasonable assumptions on the
initial data. However, the numerical methods obtained for each representation are formally different.
      In \cite{CockburnMustapha2014}, the authors extended the approach of the error analysis used in \cite{ChabaudCockburn12} for the heat equation by using several important properties of  ${\rm D}^{1-\alpha}$.  Indeed, a duality argument was applied (where delicate regularity estimates were required) to prove
 the superconvergence properties of the method.

We start our work by introducing the spatial semi-discrete HDG method for the model problem \eqref{eq: time-fractional diffusion models} in the next section. For sake of implementing the HDG scheme, we discretize in time using a generalized Crank-Nicolson scheme \cite{MustaphaAbdallahFurati2014}. The existence and uniqueness of the obtained fully discrete scheme will be shown.   In Section~\ref{sec: errors}, we prove the main optimal convergence results of the HDG method. Indeed, for each time $t \in [0,T]$, we prove that the error of the HDG approximation to the solution $u$ of \eqref{eq: time-fractional diffusion models} in the $L_\infty\bigr(0,T;{L}_2(\Omega)\bigr)$-norm and to the flux ${\bf q}:=-\nabla u$  in the $L_\infty\bigr(0,T;{\bf L}_2(\Omega)\bigr)$-norm converge with order  $h^{k+1}$ where $k$ is the polynomial degree and $h$ is the maximum diameter of the
elements of the spatial mesh; see   Theorem \ref{thm: optimal errors}.  Some important properties of the fractional integral operator $\I^\alpha$ are used in our a priori error analysis. In Section~\ref{sec:super},  for quasi-uniform meshes and whenever  $k\ge1$, by a simple elementwise postprocessing with a computation cost that is negligible in comparison with that of obtaining the HDG approximate solution, we obtain a better approximation to $u$ converging in the $L_\infty \bigr(0,T;L_2(\Omega)\bigr)$-norm with a rate of order  $\sqrt{\log (T/h^{2/(\alpha+1)}) }h^{k+2}$; see Theorem \ref{thm:super}. Here,  we partially rely on the superconvergence analysis of the postprocessed HDG scheme in \cite[Section 5]{CockburnMustapha2014}. In Section~\ref{sec: numerics}, we  present some numerical tests which indicate the validity of our theoretical optimal convergence rates of the HDG scheme as well as the superconvergence rates of  the postprocessed HDG scheme.

Here is a brief history of the  numerical methods for  problem \eqref{eq: time-fractional diffusion models} in the existing literature.  For the {\em one
dimensional case},  a box-type scheme based on combining order reduction approach and an L$_1$-discretization was
considered in \cite{ZhaoSun2011}. An explicit finite difference (FD) method, we refer the reader to
\cite{Quintana-MurilloYuste2011}. For an implicit FD scheme in time and Legendre spectral methods in space were
studied in \cite{LinXu2007}.  An extension of this work was considered in \cite{LiXu2009}, where  a time-space spectral method has been
proposed and analyzed.  An implicit
Crank--Nicolson had been considered in \cite{SweilamKhaderMahdy2012} where the stability of the proposed scheme was proven.
Two finite difference/element approaches were developed in \cite{ZengoLiLiuTurner2013}. Therein, the time direction was approximated by the fractional linear multistep method and the space direction was approximated by the standard finite element method (FEM).  A compact difference scheme (fourth order in space) was proposed in \cite{ZhaoXu2014}  for solving problem \eqref{eq: time-fractional diffusion models} but with a variable diffusion parameter.  The unconditional stability and the global
convergence of the scheme were shown.  In \cite{XuZheng2013}, a high-order local DG (LDG) method for space discretization was studied. Optimal convergence rates was proved.

For the {\em two- (or three-)  dimensional cases},  a standard second-order central difference approximation was used in space, and, for the time stepping, two
alternating direction implicit schemes ($L_1$-approximation and backward Euler method)  were investigated in
\cite{ZhangSun2011}.  A fractional alternating direction implicit scheme for problem \eqref{eq: time-fractional diffusion models} in 3D was proposed in \cite{ChenLiuLiuChenAnhTurnerBurrage2014}. Unique solvablity, unconditional stablity and convergence in $H^1$-norm were shown.
 A compact fourth order FD method (in space) with operator-splitting techniques was considered in
\cite{Cui2012}.  The Caputo derivative was evaluated by the $L_1$ approximation, and the second order spatial derivatives
  were approximated by the fourth-order, compact (implicit) finite differences. In \cite{JinLazarovZhou2014}, the authors  developed two simple fully discrete schemes based on piecewise linear Galerkin FEMs in space and implicit backward differences for the  time discretizations. Finally,   a high-order accurate (variable) time-stepping discontinuous Petrov-Galerkin  that allows low regularity combined with standard finite elements in space was investigated recently  in \cite{MustaphaAbdallahFurati2014}. Stability and error analysis were rigourously studied.

\section{The HDG method}\label{sec: HDG scheme}
 This section is devoted  to defining a scalar approximation $u_h(t)$ to $u(t)$,
 a vector approximation  $\vq_h(t)$  to the flux  $\vq(t)$,
and {a scalar approximation $\uhat_h(t)$ to the {trace} of $u(t)$ {on} element {boundaries}} for each time $t \in [0,T],$ using
 a spatial HDG method. We begin by discretizing the domain $\Omega$ by a conforming
 triangulation (for simplicity) $\oh$ made of simplexes $K$; we denote by $\partial
 \oh$ the set of all the boundaries $\partial K$ of the elements $K$ of $\oh$.  We denote by $\Eh$ the union of faces $F$ of the simplexes $K$ of the triangulation
$\oh$.

Next, we introduce the discontinuous finite element spaces:
\begin{subequations}
\begin{alignat}{3}
  \label{eq:W}
  {W_h} &= \{ {w \in L^2(\Omega)}&&{: w|_K \in \pol_k(K)}&&\;\;\;{\forall\;K\in\oh}\},
  \\
  \label{eq:V}
  {\boldsymbol{V}_h} &=  \{ \vv\in [L_2(\Omega)]^d&&{:\; \vv|_K \in [\pol_k(K)]^d}&&\;\;\;{\forall\; K\in\oh}\},
  \\
  \label{eq:M}
  M_h &= \{ \mu\in L^2(\Eh)&&: \;\mu|_F \in
  \pol_k(F)&&\;\;\;\forall\; F\in \Eh\},
\end{alignat}
\end{subequations}
 where $\pol_k(K)$ is the space of
polynomials of total degree at most $k$ in the spatial variable.

To describe our scheme,  we rewrite (\ref{time-fractional
diffusion models-a}) as a first order system as follows:
$\vq
+\nabla u = 0$ and $^{c}{\rm D}^{1- \alpha}u+\dive
\vq = f.$ So,
$\vq$ and $u$ satisfy: for  $t \in (0,T]$,
\begin{subequations}
\label{eq: weak exact solution}
  \begin{alignat}{2}\label{eq: weak exact solution1}
    ( \vq,\boldsymbol{\phi}) -
    (  u,\dive \boldsymbol{\phi}) +   \ip{ u,\boldsymbol{\phi}\cdot\vn}
    &  = 0 && \quad \forall \,\boldsymbol{\phi}\in { \Hdiv\Omega},
    \\\label{eq: weak exact solution3}
     (^{c}{\rm D}^{1- \alpha}u,\chi) -(   \vq, \nabla \chi) +
    \ip{  \vq\cdot\vn, \chi}
    & =
    (f, \chi)&&\quad \forall \, \chi \in H^1(\Omega)\,.\end{alignat}
\end{subequations}
where  $(v,w):=\sum_{K \in \Th}(v,w)_K$ and $\ip{v,w} := \sum_{K \in \Th} \ip{ v,w}_{\d K}$. Throughout the paper, for any domain
$D$ in $\RRR^d$, by  $(u,v)_D =\int_D u v \; dx$ we denote the $L_2$-inner product on $D$. However,  we use instead $\ip{u,v}_{D}$ for the
$L_2$-inner product when $D$ is a domain of $\RRR^{d-1}$. We use  $\| \cdot\|_D$ to denote the $L^2(D)$-norm where we drop $D$ when
$D=\Omega$. For vector functions $\vv$ and $\vw$, the notation is similarly defined
with the integrand being the dot product $\vv \cdot \vw$.
 For later use, the norm and semi-norm  on any Sobolev space $X$ are denoted by $\|\cdot\|_X$ and $|\cdot|_X$, respectively. We also  denote $\| \cdot\|_{X(0,T;Y(\Omega))}$ by $\| \cdot\|_{X(Y)}$.

For each $t>0$, the HDG method {provides} approximations $u_h(t) \in W_h$, $\vq_h(t) \in
\boldsymbol{V}_h$, and $\uhat_h(t) \in M_h$ of $u(t)$, $\vq(t)$, and the trace of $u(t)$, respectively. These are
determined by requiring that
\begin{subequations}
\label{method}
\begin{alignat}{2}
\label{eq:method-a}
 ( \vq_h,\vr) -
 ( u_h,\dive \vr) +   \ip{\uhat_h,\vr\cdot\vn}
 &  = 0, \quad &&  \forall \,\vr\, {\in} \boldsymbol{V}_h,
 \\
 \label{eq:method-b}
(^{c}{\rm D}^{1- \alpha}u_h, w) -( \vq_h, \nabla w) +
 \ip{ \widehat{\boldsymbol{q}}_h\cdot\vn, w}
 & =
 (f, w),  \quad &&\forall \, w \in W_h,
\\
\label{eq:method-d}
  \ip{\uhat_h,\mu}_{\d\Omega}&=\ip{g,\mu }_{\d\Omega}, \quad &&\forall \,\mu \in M_h,
\\
\label{eq:method-c}
 \ip{\widehat{\boldsymbol{q}}_h \cdot \vn,\mu }-\ip{\widehat{\boldsymbol{q}}_h \cdot \vn,\mu }_{ \d\Omega} & =
 0, \quad &&\forall \, \mu \in M_h,
\end{alignat}
     and take the
numerical trace for the flux as
\begin{alignat}{2}
\label{HDGtrace}  \qhat_h &= \vq_h + \tau \, \big( u_h - \uhat_h \big) \vn  &&\quad \text{ on } \doh,
\end{alignat}
\end{subequations}
for some nonnegative stabilization function $\tau$ defined on
$\doh$; we assume that, for each element $K\in\oh$, $\tau|_{\partial
K}$ is constant on each of its faces. At $t=0$, $u_h(0)  := \pw u_0$ where the operator $\pw$ is defined in \eqref{eq:proj}.

The first two
equations are inspired in the weak form of the fractional
differential equations satisfied by the exact solution,  \eqref{eq:
weak exact solution}.  The form of the numerical
trace given by \eqref{eq:method-c} allows us to express
$(u_h,\vq_h,\widehat{\boldsymbol{q}}_h)$ elementwise in terms of
$\uhat_h$ and $f$  by using equations \eqref{eq:method-a},
\eqref{eq:method-b} and \eqref{HDGtrace}. Then,
the numerical trace  $\uhat_h$ is determined by as the solution of the transmission
condition \eqref{eq:method-c}, which enforces the single-valuedness
of the normal component of the numerical trace
$\widehat{\boldsymbol{q}}_h$, and the boundary condition
\eqref{eq:method-d}. Thus, the only globally-coupled degrees of
freedom are those of $\uhat_h$.

 In our experiments, to implement our spatial semi-discrete HDG scheme \eqref{method},   we use for simplicity  a generalized Crank-Nicolson (CN) scheme for time discretization, see \cite{Mustapha2011,MustaphaAbdallahFurati2014}.
Formally, the CN scheme is second-order accurate provided that the continuous solution is sufficiently regular.
To this end,   we introduce a  uniform partition of the time interval
$[0,T]$ given by the points: $t_i= i\delta $ for $i=0,\cdots,N,$ with $\delta=T/N$ being the time-step size.  We take $\delta$ to be sufficiently small  so that the spatial discretizations errors  are dominant.

 The time-stepping CN combined with the above HDG method {provides} approximations $u_h^j \in W_h$, $\vq_h^j \in
\boldsymbol{V}_h^j$, and  $\uhat_h^j \in M_h$ of $u(t_j)$, $\vq(t_j)$, and the trace of $u(t_j)$, respectively, for $j=1,\cdots,N$. Starting from $u_h^0  = \pw u_0$, and with appropriate choices of $\vq_h^0$ and $\uhat_h^0$, our fully discrete scheme is defined by:
\begin{equation}
\label{Fully discrete scheme}
\begin{aligned}
 ( \vq_h^{j-\frac{1}{2}},\vr) -
 ( u_h^{j-\frac{1}{2}},\dive \vr) +   \ip{\uhat_h^{j-\frac{1}{2}},\vr\cdot\vn}
 &  = 0,&&~~ \forall \,\vr  \in \boldsymbol{V}_h,
 \\
(\J_\alpha\overline u_h(t_j), w)-( \vq_h^{j-\frac{1}{2}}, \nabla w) +
 \ip{ \widehat{\boldsymbol{q}}_h^{j-\frac{1}{2}}\cdot\vn, w}
 & =
 (f^{j-\frac{1}{2}}, w),&&~~ \forall\, w \in W_h,
\\
\ip{\uhat_h^j,\mu}_{\d\Omega}&=\ip{g,\mu }_{\d\Omega}, &&~~\forall \,\mu \in M_h,
\\
 \ip{\widehat{\boldsymbol{q}}_h^j \cdot \vn,\mu_1 }-\ip{\widehat{\boldsymbol{q}}^j_h \cdot \vn,\mu_1 }_{ \d\Omega} & =
 0,&&~~ \forall \,\mu_1 \in M_h,
\end{aligned}
\end{equation}
 where $f^{j-\frac{1}{2}}:= \frac{1}{2}(f(t_{j-1})+f(t_j))$,  $
  \qhat_h^j = \vq_h^j + \tau \, \big( u_h^j - \uhat_h^j \big) \vn$ on $\doh$,
  \begin{align*}\J_\alpha\overline u_h(t_j)&=\int_{t_{j-1}}^{t_j} \int_0^t\omega_\alpha(t-s)\overline u_h(s)\,ds\,dt,\end{align*}
with $\overline u_h(s):= \delta^{-1}(u^i_h-u^{i-1}_h)$ for $s\in (t_{i-1}, t_i)$,   $ \vq_h^{j-\frac{1}{2}}:= \frac{1}{2}( \vq_h^j+ \vq_h^{j-1})$, and the functions $u_h^{j-\frac{1}{2}}$, $\uhat_h^{j-\frac{1}{2}}$, and $\widehat{\boldsymbol{q}}_h^{j-\frac{1}{2}}$ are similarly defined.

For each $1\le j\le N$, \eqref{Fully discrete scheme} amounts to a square linear system. Thus the existence of the CN HDG solution follows
 from its uniqueness. We prove the uniqueness by induction hypothesis on $j$. We let $f^{i-\frac{1}{2}}$ (for $1\le i\le j$) and $g$ be identically zero  in \eqref{Fully discrete scheme},  we assume that  $(u_h^i,\vq_h^i,\uhat_h^i)\equiv (0,{\bf
0},0)$ for $1\le i\le j-1$ and  the task is to show that this holds true for $i=j$. To do so, choose $\vr =\vq_h^j$,   $w = u_h^{j}$,  $\mu = \widehat{\boldsymbol{q}}_h^j \cdot \vn$ and $\mu_1=\uhat_h^j$ in \eqref{Fully discrete scheme} and then simplify, yield
\begin{equation*}
\begin{aligned}
 \|\vq_h^j\|^2 -
 ( u_h^{j},\dive \vq_h^j) +   \ip{\uhat_h^{j},\vq_h^j\cdot\vn}
 &  = 0,
 \\
(\J_{\alpha}\overline u_h(t_j), u_h^{j}) -( \vq_h^{j}, \nabla u_h^{j}) +
 \ip{ \widehat{\boldsymbol{q}}_h^{j}\cdot\vn, u_h^{j}}
 & =0,
\\
 \ip{\widehat{\boldsymbol{q}}_h^j \cdot \vn,\uhat_h^j }&=0
 ,
\end{aligned}
\end{equation*}
Since $( u_h^{j},\dive \vq_h^j) = \ip{u_h^{j},\vq_h^j\cdot\vn} -( \vq_h^{j}, \nabla u_h^{j}),$ adding the above equations give
\[
(\J^{\alpha}\overline u_h(t_j), u_h^{j})+\|\vq_h^j\|^2
 + \ip{\uhat_h^{j}-u_h^{j},(\vq_h^j-\widehat{\boldsymbol{q}}_h^{j})\cdot\vn}=0\,.\]
Hence, by  the induction hypothesis and the identity $(\vq_h^j-\widehat{\boldsymbol{q}}_h^{j})\cdot\vn = \tau \, \big( u_h^j - \uhat_h^j \big) $ on $\doh$, we notes that
\[
\int_0^{t_{j}}(\I^{\alpha}\overline u_h(t), \overline u_h(t))\,dt+\|\vq_h^j\|^2
 + \|\sqrt{\tau}(\uhat_h^{j}-u_h^{j})\|_{\doh}^2=0,\]
and therefore,  the use of the coercivity property of $\I^\alpha$ (see \eqref{eq: coercivity}) completes the proof.

\section{Error estimates}\label{sec: errors}
In this section, we carry our a priori error
analysis of the HDG method. Following \cite{ChabaudCockburn12,CockburnGopalakrishnanSayas09,CockburnMustapha2014}, we start by defining  the projections below which play the comparison function role in the error analysis.

For each $t\in (0,T]$, we assume that $\vq(t)\in [H^1(\oh)]^d$ and $u(t)\in  H^1(\oh)$, where
 $H^1(\oh) = \prod_{K\in \oh} H^1(K),$  the projections $ \pv\vq(t) \in \boldsymbol{V}_h$ and $\pw u(t) \in W_h$
are defined by: on each simplex $K\in\oh$ and for all faces $F$ of $K$,
\begin{subequations}
  \label{eq:proj}
  \begin{align}
  \label{eq:proj1 new}
    (\pv \vq(t), \vv)_K
    &= ( \vq(t), \vv)_K,
    \\
    \label{eq:proj2}
    (\pw u(t), w )_K
    &= ( u(t), w )_K,
    \\
    \label{eq:proj3}
    \ip{\pv \vq(t) \cdot \vn + \tau \pw u(t),\mu }_F
    &= \ip{\vq(t)\cdot\vn + \tau  u(t),\mu }_F,
  \end{align}
\end{subequations}
for al $\vv \in [\pol_{k-1}(K)]^d$,  $w \in \pol_{k-1}(K)$ and $\mu \in \pol_k(F)$. This projection
introduced in \cite{CockburnGopalakrishnanSayas09} to study HDG methods for the steady-state diffusion
problem and also used in the error analyses of HDG methods for classical diffusion \cite{ChabaudCockburn12} as well as  for fractional subdiffusion \cite{CockburnMustapha2014} problems. Its approximation properties are described in the following
result.
\begin{theorem} (\cite{CockburnGopalakrishnanSayas09})
\label{thm:proj} Suppose
   {$\tau|_{\d K}$ is nonnegative and $\tau^{\max}_K:=\max\tau|_{\d K} >0$}.
  Then the system~\eqref{eq:proj} is uniquely solvable for
$\pv \vq$ and~$\pw u$.  Furthermore, there is a constant ${C}$
  independent of $K$ and $\tau$ such that for each $t\in (0,T]$,
\begin{subequations}
  \begin{alignat*}{1}
   \|e_\vq(t) \|_K\le&\;C\,h^{k+1}_K\left(|\vq(t)|_{\boldsymbol{H}^{k+1}(K)}
                       +{\tau_K^{*}}\,| u(t)|_{H^{k+1}(K)}\right),
    \\
    \|  e_u(t)\|_K \le&\;C\,h^{k+1}_K\left(|u(t)|_{H^{k+1}(K)}
                       +|\nabla\cdot\vq(t)|_{H^k(K)}/\tau_K^{\max}\right)
  \end{alignat*}
 \end{subequations}
where $e_\vq:= \pv \vq-\vq$ and $e_u:= \pw  u - u.$  Here $\tau_K^{*}:=\max \tau|_{\d K\setminus F^*}$, where $F^*$ is a face of $K$ at which $\tau|_{\d K}$ is maximum.
 \end{theorem}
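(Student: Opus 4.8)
The plan is to establish the projection estimates by exploiting the block structure of the defining system \eqref{eq:proj}. First I would observe that conditions \eqref{eq:proj1 new} and \eqref{eq:proj2} say precisely that $\pv\vq$ and $\pw u$ agree with the standard $L_2$-orthogonal projections onto $[\pol_{k-1}(K)]^d$ and $\pol_{k-1}(K)$, respectively, when tested against polynomials of degree $k-1$. The only coupling between the vector and scalar parts occurs through the face condition \eqref{eq:proj3}, which involves the full degree-$k$ boundary data. The strategy is therefore to split each projection into its low-degree part, controlled directly by $L_2$-approximation theory, plus a correction that lives in the orthogonal complement and is pinned down by the face equation.

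For well-posedness, since \eqref{eq:proj} is a square linear system on each $K$, it suffices to show uniqueness: setting $\vq=0$ and $u=0$, I would test \eqref{eq:proj3} against $\mu = \pv\vq\cdot\vn + \tau\pw u$ and use the interior conditions to force $\pv\vq$ and $\pw u$ to vanish, the hypothesis $\taum_K>0$ guaranteeing that the boundary term is genuinely controlling. For the error bounds, the key device is a scaling (Bramble--Hilbert) argument on the reference simplex $\widehat K$: after mapping to $\widehat K$, the projection error operator annihilates polynomials of degree $k$, so the Bramble--Hilbert lemma yields a bound in terms of the degree-$(k+1)$ seminorms, and mapping back inserts the powers $h_K^{k+1}$. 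The precise way $\tau$ enters the two estimates — through the ratio $\tau^*_K$ in the bound for $e_\vq$ and through $1/\taum_K$ in the bound for $e_u$ — reflects which variable the face condition \eqref{eq:proj3} ``charges'' the boundary mismatch to, and must be tracked carefully in the scaling.

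The main obstacle I expect is controlling this dependence on the stabilization parameter $\tau$ uniformly, i.e.\ producing a constant $C$ that is genuinely independent of both $K$ and $\tau$. Because $\tau$ can vary face-by-face and is only assumed nonnegative (with the single nondegeneracy $\taum_K>0$), a naive estimate would let the constant blow up as $\tau$ degenerates on individual faces; the refined quantities $\tau^*_K=\max\tau|_{\d K\setminus F^*}$ and $\taum_K$ are precisely what make the bounds scale-invariant. Handling this requires a careful inf--sup or energy argument on $\widehat K$ that isolates the face $F^*$ carrying the maximal $\tau$, so that the remaining faces contribute only through the ratio $\tau^*_K/\taum_K \le 1$.

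Since this projection and its approximation properties are established in \cite{CockburnGopalakrishnanSayas09}, I would not reprove them from scratch here; the estimates are quoted directly, and the semi-norm appearing on the right-hand sides is understood to be the seminorm $|\cdot|_{H^{k+1}(K)}$ measuring the degree-$(k+1)$ derivatives that the projection fails to reproduce. The observation that the statement holds with constants uniform in $t$ is immediate, since \eqref{eq:proj} is solved independently at each fixed $t$ and the constant $C$ from the scaling argument does not see the time variable at all.
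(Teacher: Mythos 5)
The paper itself gives no proof of this theorem: it is simply quoted from \cite{CockburnGopalakrishnanSayas09}, which is exactly what you ultimately do, so your approach coincides with the paper's. Your accompanying sketch (uniqueness of the square local system, Bramble--Hilbert scaling on the reference simplex, and tracking the $\tau$-dependence through $\tau^*_K$ and $\tau^{\max}_K$) is a fair summary of the argument in that reference, but no such detail is required or supplied in this paper.
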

Note that  the approximation error of the projection is of order $k+1$  provided that  the stabilization function is such that both  $\tau^*_K$
and $1/\tau^{\max}_K$ are uniformly bounded and the exact solution is sufficiently regular.

Thus, the main task now is to estimate the terms $\eu:=\pw u-u_h$ and  $\eq:= \pv \vq-\vq_h$.   For convenience, we further introduce the following notations:
$\el:= P_M u-\widehat{u}_h$ and  $\eqhat:=   \boldsymbol{P}_M \vq-\widehat{\vq}_h$
where $P_M$ denotes the $L^2$-orthogonal projection onto $M_h$, and $\boldsymbol{P}_M$ denotes the vector-valued projection each of  whose
components are equal to $P_M$.

The projection of the errors satisfy the equations stated in the next lemma.
\begin{lemma}        \label{lem:consistency}
For each $t >0,$ we have
  \begin{subequations}
    \label{eq:err}
    \begin{alignat}{2}
      \label{eq:err-a}
 ( \eq,\vr) - ( \eu,\dive \vr) +   \ip{ \el,\vr\cdot\vn} &  = ( e_{\vq}
 ,\vr),\quad &&\forall\, \vr\in \boldsymbol{V}_h
 \\
      \label{eq:err-b}
 (\I^\alpha\eu',w)-( \eq, \nabla w) +   \ip{ \eqhat \cdot\vn, w}
 & = (\I^\alpha e_u',w),\quad &&\forall\, w\in W_h
\\
    \label{eq:err-c}
 \ip{\el,\mu}_{\d\Omega}
   & = 0,\quad &&\forall\, \mu\in M_h
\\
    \label{eq:err-d}
 \ip{\eqhat \cdot \vn,\mu }-\ip{\eqhat \cdot \vn,\mu }_{\d\Omega}
   & =  0,\quad   &&\forall\, \mu\in M_h
\end{alignat}
  where
\begin{alignat}{2}
  \label{eq:err-d2}
  \eqhat\cdot\vn
                 &:= \eq\cdot\vn  + \tau (\eu -\el)
&&\quad\mbox{ on }\doh.
\end{alignat}
\end{subequations}
\end{lemma}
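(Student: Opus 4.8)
The plan is to derive the error equations \eqref{eq:err} by subtracting the HDG scheme \eqref{method} from the weak form \eqref{eq: weak exact solution}, and then rewriting the resulting residuals using the defining properties of the projections $\pv$ and $\pw$ from \eqref{eq:proj}. First I would test the exact weak equations \eqref{eq: weak exact solution} against the discrete test functions $\vr\in\boldsymbol{V}_h$, $w\in W_h$, and $\mu\in M_h$; since $\boldsymbol{V}_h\subset\Hdiv\Omega$ and $W_h\subset H^1(\oh)$, these are admissible (piecewise) test functions, so the exact solution $(\vq,u)$ satisfies the same bilinear relations as the discrete solution, now with the traces $\uhat=u|_{\Eh}$ and $\qhat\cdot\vn=\vq\cdot\vn$ playing the roles of $\uhat_h$ and $\qhat_h\cdot\vn$.

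Next I would subtract the method equations from these, so that the unknowns become the genuine errors $\vq-\vq_h$, $u-u_h$, and the trace errors. The key manipulation is to split each error through the projection, writing $\vq-\vq_h=e_\vq-\eq$, $u-u_h=e_u-\eu$, $u-\uhat_h=(P_Mu-\el)+\dots$ and similarly for the flux trace, and then to \emph{eliminate} the projection-error terms $e_\vq,e_u$ wherever the defining orthogonalities \eqref{eq:proj1 new}--\eqref{eq:proj3} make them vanish against the polynomial test functions. Concretely, in \eqref{eq:err-a} the term $(e_u,\dive\vr)_K$ drops because $\dive\vr\in\pol_{k-1}(K)$ and by \eqref{eq:proj2}, while the boundary contribution of $e_\vq\cdot\vn+\tau e_u$ against $\mu\in\pol_k(F)$ cancels by \eqref{eq:proj3}, leaving exactly the residual $(e_\vq,\vr)$ on the right. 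The same bookkeeping in the second equation, using that $\I^\alpha$ commutes with the (time-independent) spatial projections, produces the $(\I^\alpha e_u',w)$ right-hand side; equations \eqref{eq:err-c} and \eqref{eq:err-d} follow directly from the boundary and transmission conditions \eqref{eq:method-d}, \eqref{eq:method-c} together with $\ip{u,\mu}_{\d\Omega}=\ip{g,\mu}_{\d\Omega}$. Finally, the numerical-trace identity \eqref{eq:err-d2} is obtained by subtracting \eqref{HDGtrace} from $\qhat\cdot\vn=\vq\cdot\vn$ and regrouping, again using \eqref{eq:proj3} to match the $\tau$-weighted terms.

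The main obstacle I anticipate is the careful tracking of the \emph{trace} and \emph{stabilization} terms: one must verify that the definition \eqref{eq:err-d2} of $\eqhat\cdot\vn$ is genuinely consistent with both the interior equations and the defining property \eqref{eq:proj3} of the projection, so that no stray $e_\vq\cdot\vn$ or $\tau e_u$ terms survive on the element boundaries. A secondary subtlety is that $\el=P_Mu-\uhat_h$ and $\eqhat=\boldsymbol{P}_M\vq-\qhat_h$ are defined through the $L^2$-projection $P_M$ rather than through $\pv,\pw$, so I would need to insert and remove $P_M$ on the skeleton terms (exploiting that $P_M$ acts as the identity on $M_h$ against test functions $\mu\in M_h$) to reconcile the two families of projections. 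Once these boundary cancellations are confirmed, the remaining steps are routine linear algebra and the lemma follows.
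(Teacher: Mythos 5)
Your proposal is correct and takes essentially the same route as the paper's proof: test the exact weak form \eqref{eq: weak exact solution} against the discrete spaces, use the orthogonality properties \eqref{eq:proj1 new}--\eqref{eq:proj3} of $(\pv,\pw)$ together with the $L^2$-projection $P_M$ (and the single-valuedness of $\vq\cdot\vn$, i.e.\ $\vq\in\Hdiv\Omega$, for \eqref{eq:err-d}), and subtract the scheme \eqref{method}; the paper simply performs these two steps in the opposite order, first rewriting the exact equations in projected form and then subtracting. Two cosmetic slips to correct in a write-up: the decompositions should read $\vq-\vq_h=\eq-e_{\vq}$ and $u-u_h=\eu-e_u$ (your signs are reversed), and in \eqref{eq:err-a} the skeleton term is killed by the $P_M$-orthogonality $\ip{u-\Pm u,\vr\cdot\vn}=0$, whereas \eqref{eq:proj3} is what produces \eqref{eq:err-d2} and hence the trace term in \eqref{eq:err-b}.
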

\emph{Proof} From \eqref{eq: weak exact solution}, we recall that  $\vq$ and $u$ satisfy the equations
  \begin{subequations}
  \begin{alignat*}{2}
    ( \vq,\vr) -
    (  u,\dive \vr) +   \ip{ u,\vr\cdot\vn}
    &  = 0 &&\quad\forall ~\vr\in \boldsymbol{V}_h,
    \\
     (\I^\alpha u',w)-(   \vq, \nabla w) +
    \ip{  \vq\cdot\vn, w}
    & =
    (f, w) &&\quad\forall\,w \in W_h\,.
\end{alignat*}
\end{subequations}
 By  \eqref{eq:proj1 new}, \eqref{eq:proj2} and the fact that $\Pm$ is the
   $L^2-$projection into $M_h$, we get
\begin{align}\label{*a}
&    ( \pv\vq,\vr) -
    ( \pw u,\dive \vr) +   \ip{\Pm u,\vr\cdot\vn}   = (e_{\vq} ,\vr),\quad \forall\,  \vr\in \boldsymbol{V}_h\\
 &   (\I^\alpha (\pw u)',w)-(\pv\vq, \nabla w)
    + \ip{  \pv\vq\cdot\vn +\tau(\pw u-\Pm u), w}\nonumber
      \\
    &  \hspace{2.2in}=
    (f+\I^\alpha e_u', w),\quad \forall\,w\in W_h, \label{*b}
  \end{align}
 given that, for each element $K\in\oh$, $\tau$ is constant on each
face $F$ of $K$.

 Subtracting the equations \eqref{eq:method-a} and
\eqref{eq:method-b} from \eqref{*a} and \eqref{*b}, respectively, we obtain
equations \eqref{eq:err-a} and \eqref{eq:err-b}, respectively.  The equation~\eqref{eq:err-c} follows directly from the
equation \eqref{eq:method-d} and \eqref{time-fractional diffusion
models-b}

By the definition of $\eqhat$ and since $P_M$ is the $L^2$-projection into $M_h$, we have
 \begin{align*}
   \ip{\eqhat\cdot\vn,\mu}&-\ip{\eqhat\cdot\vn,\mu}_{\d\Omega}
 = \ip{ (p_M\vq-\qhat_h)\cdot\vn,\mu}-\! \ip{ (p_M\vq-\qhat_h)\cdot\vn,\mu}_{
\d\Omega}\\
&=   \ip{ (\vq-\qhat_h)\cdot\vn,\mu}-\ip{
(\vq-\qhat_h)\cdot\vn,\mu}_{ \d\Omega}\\
&=   [\ip{ \vq\cdot\vn,\mu}-\ip{
\vq\cdot\vn,\mu}_{ \d\Omega}]-[\ip{\qhat_h\cdot\vn,\mu}-\ip{
\qhat_h\cdot\vn,\mu}_{ \d\Omega}]=0
\end{align*}
 where in the last equality we used  that  $\vq$ is in ${\bf  H}({\rm div},\Omega)$ and equation \eqref{eq:method-c}. Thus,  the identity
\eqref{eq:err-d} holds. For the proof of \eqref{eq:err-d2},
\begin{align*}
\eqhat\cdot\vn&=P_M
(\vq\cdot\vn)-(\vq_h\cdot\vn+\tau\,(u_h-\uhat_h))
&&\quad {\rm by}~\eqref{HDGtrace},
\\&=(\pv\vq\cdot\vn+\tau\,(\pw u-P_Mu))-(\vq_h\cdot\vn+\tau\,(u_h-\uhat_h))
&&\quad {\rm by}~\eqref{eq:proj3},
\\&=\eq\cdot\vn  + \tau (\eu -\el)\,. \quad  \Box\end{align*}
\begin{lemma}\label{lemma inter bound} Let $S_h:=\|\sqrt{\tau}(\eu -\el)\|_{\doh}$ and $c_\alpha:=\cos(\frac{\alpha\pi}{2})$. For $T>0$,
\begin{multline*} 
\int_0^T\! (\I^\alpha \eu',\eu')\,dt+\|\eq(T)\|^2+S_h^2(T) \leq
\|\eq(0)\|^2+S_h^2(0)\\+\frac{1}{c_\alpha^2}\int_0^T \!(\I^\alpha e_u',e_u')\,dt +2\int_0^T
\!(e_\vq',\eq)\,dt\,.\end{multline*}
\end{lemma}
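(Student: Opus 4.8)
The plan is to run the standard HDG energy argument on the error equations of Lemma~\ref{lem:consistency}, differentiated once in time. Since \eqref{eq:err-a} holds for every $\vr\in\boldsymbol{V}_h$ and every $t$, and the assumed regularity of $u$ lets me differentiate the projected-error equations in time, I would first differentiate \eqref{eq:err-a} with respect to $t$ and test the result with $\vr=\eq$, obtaining
\[
(\eq',\eq)-(\eu',\dive\eq)+\ip{\el',\eq\cdot\vn}=(e_{\vq}',\eq).
\]
Next I would test \eqref{eq:err-b} with $w=\eu'$. Adding the two identities and applying the elementwise Green's formula $(\eu',\dive\eq)=-(\nabla\eu',\eq)+\ip{\eu',\eq\cdot\vn}$, the two volume terms $\pm(\eq,\nabla\eu')$ cancel, leaving the coercive term $(\I^\alpha\eu',\eu')$, the time derivative $(\eq',\eq)=\tfrac12\tfrac{d}{dt}\|\eq\|^2$, a collection of face terms, and the right-hand side $(e_{\vq}',\eq)+(\I^\alpha e_u',\eu')$.

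The crux is to show that the remaining face terms assemble into $\tfrac12\tfrac{d}{dt}S_h^2$. Using the definition \eqref{eq:err-d2}, the combination $-\ip{\eu',\eq\cdot\vn}+\ip{\eqhat\cdot\vn,\eu'}$ equals $\ip{\tau(\eu-\el),\eu'}$, so the boundary contribution is $\ip{\tau(\eu-\el),\eu'}+\ip{\el',\eq\cdot\vn}$. To treat the last term I would rewrite $\eq\cdot\vn=\eqhat\cdot\vn-\tau(\eu-\el)$ and argue that $\ip{\el',\eqhat\cdot\vn}=0$: since $\el'\in M_h$, \eqref{eq:err-d} gives $\ip{\eqhat\cdot\vn,\el'}=\ip{\eqhat\cdot\vn,\el'}_{\d\Omega}$, while differentiating \eqref{eq:err-c} in time and choosing $\mu=\eqhat\cdot\vn$ (which belongs to $M_h$ on $\d\Omega$, the faces being flat) makes that boundary term vanish. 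What survives is $\ip{\tau(\eu-\el),\eu'}-\ip{\tau(\eu-\el),\el'}=\ip{\tau(\eu-\el),(\eu-\el)'}=\tfrac12\tfrac{d}{dt}S_h^2$, exactly as needed.

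Integrating the resulting identity over $(0,T)$ and multiplying by $2$ yields
\[
\|\eq(T)\|^2+S_h^2(T)+2\int_0^T(\I^\alpha\eu',\eu')\,dt
=\|\eq(0)\|^2+S_h^2(0)+2\int_0^T(e_{\vq}',\eq)\,dt+2\int_0^T(\I^\alpha e_u',\eu')\,dt.
\]
The final step, which I expect to be the main obstacle, is to absorb the cross term $2\int_0^T(\I^\alpha e_u',\eu')\,dt$ without losing a power of the coercivity constant. Here I would invoke the Cauchy--Schwarz inequality attached to the coercivity \eqref{eq: coercivity} of $\I^\alpha$, namely
\[
\int_0^T(\I^\alpha v,w)\,dt\le \frac{1}{c_\alpha}\Big(\int_0^T(\I^\alpha v,v)\,dt\Big)^{1/2}\Big(\int_0^T(\I^\alpha w,w)\,dt\Big)^{1/2},
\]
followed by Young's inequality $2xy\le x^2+y^2$ with $x=\big(\int_0^T(\I^\alpha\eu',\eu')\,dt\big)^{1/2}$. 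This bounds the cross term by $\int_0^T(\I^\alpha\eu',\eu')\,dt+\tfrac{1}{c_\alpha^2}\int_0^T(\I^\alpha e_u',e_u')\,dt$; moving the first summand to the left-hand side turns the coefficient $2$ into $1$ and produces precisely the claimed estimate. The two delicate points are thus justifying the time differentiation of the error equations (which rests on the regularity of $u$) and having the sharp fractional Cauchy--Schwarz inequality with constant $1/c_\alpha$ in hand; everything else is bookkeeping.
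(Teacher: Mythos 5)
Your proposal is correct and follows essentially the same route as the paper's own proof: differentiate the first error equation in time, test with $\vr=\eq$ and $w=\eu'$, identify the surviving face terms as $\tfrac12\tfrac{d}{dt}S_h^2$ using the time-differentiated \eqref{eq:err-c} together with \eqref{eq:err-d} and \eqref{eq:err-d2}, integrate over $(0,T)$, and absorb the cross term. The ``fractional Cauchy--Schwarz with constant $1/c_\alpha$ plus Young'' step you single out as the main obstacle is precisely the continuity property of $\I^\alpha$ that the paper cites from \cite{MustaphaSchoetzau2013}, so there is no substantive difference between the two arguments.
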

\begin{proof}
Since $ ( \eu,\dive
 \vr)=-(\nabla\eu,\vr)+\ip{\eu,\vr.\boldsymbol{n}},$
 \eqref{eq:err-a} can be rewritten as:
\[( \eq,\vr) +(\nabla\eu,\vr) + \ip{
\el-\eu,\vr\cdot\vn}=(e_{\vq} ,\vr)\,.\]
 A time differentiation of both sides yields,
 \[( \eq',\vr) +(\nabla\eu',\vr) + \ip{ \el'-\eu',\vr\cdot\vn}=(e_{\vq}'
,\vr)\,.\]
 Setting $\vr = \eq$ and choose  $w=\eu'$ in equation \eqref{eq:err-b},  we observe,
\begin{align*}
( \eq',\eq) +(\nabla\eu',\eq) + \ip{ \el'-\eu',\eq\cdot\vn}&=(e_{\vq}'
,\eq)\,,\\
(\I^\alpha\eu',\eu')-( \eq,\nabla
\eu')+ \ip{ \eqhat \cdot\vn, \eu'}
  &= (\I^{\alpha} e_u',\eu')\,.\end{align*}
  Combine the above two equations and using $( \eq',\eq) =\frac{1}{2}\frac{d}{dt}\|\eq\|^2$, we obtain
\begin{equation} \label{eq:A1}
 (\I^\alpha \eu',\eu')+\frac{1}{2}\frac{d}{dt}\|\eq\|^2+{\psi}_{h}=(\I^{\alpha} e_u',\eu')+(e_\vq',\eq)\,.
\end{equation}
where
\[ {\psi}_{h}=  \ip{\el'-\eu',\eq\cdot\vn}+ \ip{ \eqhat
\cdot\vn, \eu'}\,.
\]
A time differentiation of \eqref{eq:err-c} follows by choosing $\mu=\eqhat \cdot\vn $ and then using \eqref{eq:err-d} yield
 $\ip{
\eqhat \cdot\vn, \el'}_{\d\Omega} =\ip{ \eqhat \cdot\vn,
\el'}=0$. Thus, by \eqref{eq:err-d2},
\begin{multline}\label{eq:psi}\psi_h=\ip{ \el'-\eu',(\eq-\eqhat)\cdot\vn}
 =\ip{\tau(\eu' -\el'),(\eu
-\el)}=\frac{1}{2}\frac{d}{dt}S_h^2(t)\,. \end{multline}
Now, integrating  \eqref{eq:A1}  over the time
interval $[0,T]$ and using \eqref{eq:psi}, we get
   \[\int_0^T \!
(\I^\alpha\eu',\eu')\,dt+\frac{1}{2}\int_0^T \!\frac{d}{dt}[\|\eq\|^2+S_h^2]\,dt=\int_0^T \!(\I^{\alpha}e_{u}',\eu')\,dt+\int_0^T
\!(e_\vq',\eq)\,dt\,.\]
 Therefore,
 \begin{multline} \label{mu:A2}  2\int_0^T\! (\I^{\alpha}\eu',\eu')\,dt+\|\eq(T)\|^2+S_h^2(T) \\=
\|\eq(0)\|^2+S_h^2(0)
+ 2\int_0^T \!(\I^{\alpha}e_{u}',\eu')\,dt+2\int_0^T
\!(e_\vq',\eq)\,dt\,.\end{multline}
An application of  the continuity property of the fractional derivative operator $\I^{\alpha}$ (see \cite[Lemma 3.6]{MustaphaSchoetzau2013}),  yields
\[2\Big|\int_0^T
\!(\I^{\alpha}e_{u}',\eu')\,dt\Big|\leq\frac{1}{c_\alpha^2}\int_0^T
\!(\I^{\alpha} e_{u}',e_u')\,dt + \int_0^T
\!(\I^{\alpha}\eu',\eu')\,dt\,.\]
Finally, inserting this in \eqref{mu:A2} and simplifying will complete the proof.
\end{proof}

  To be ready to show the main error bounds of the HDG method, we recall  from \cite[Lemma 3.6]{MustaphaSchoetzau2013} the following coercivity  property of the  operator $\I^\alpha$. For any $v \in \mathcal{C}(0,T;L_2(\Omega))$,  we have
\begin{align}&{\rm Coercivity~ property}:~~\int_0^T\!
(\I^{\alpha}v(t),v(t)) \,dt \geq c_\alpha \int_0^T\|\I^{\frac{\alpha}{2}}v(t)\|^2 \,dt\,.\label{eq: coercivity}
\end{align}
\begin{theorem}\label{thm: optimal errors}
 Assume that $u\in\mathcal{C}^1(0,T; H^{k+1}(\Omega))$ and $\vq\in\mathcal{C}^1(0,T;\boldsymbol{H}^{k+1}(\Omega))$. Assume also
that $\tau^*_K$ and $1/\tau^{\max}_K$ are bounded by $\mathsf{C}$. Then we have that
\[
\|(u-u_h)(T)\|+\|(\vq-\vq_h)(T)\|\le\, C_1(1+T)\,h^{k+1}.
\]
The constant $C_1$ only depends on  $\mathsf {C}$,  $\alpha$,
$\|u\|_{\mathcal{C}^1(H^{k+1})}$, and on $\|\vq\|_{\mathcal{C}^1(H^{k+1})}$.
\end{theorem}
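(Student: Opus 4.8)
The plan is to write $u-u_h=e_u+\eu$ and $\vq-\vq_h=e_\vq+\eq$, so that by the triangle inequality it suffices to control the projection errors $e_u(T),e_\vq(T)$ and the discrete errors $\eu(T),\eq(T)$. The projection errors are immediately $O(h^{k+1})$ by Theorem~\ref{thm:proj} together with the regularity $u\in\mathcal{C}^1(0,T;H^{k+1}(\Omega))$, $\vq\in\mathcal{C}^1(0,T;\boldsymbol{H}^{k+1}(\Omega))$ and the bounds $\tau^*_K,1/\tau^{\max}_K\le\mathsf{C}$; applying the same theorem to $u'$ and $\vq'$ bounds $\|e_u'(t)\|$ and $\|e_\vq'(t)\|$ by $Ch^{k+1}$ uniformly in $t$. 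Since $u_h(0)=\pw u_0$ gives $\eu(0)=0$, and with the natural (algebraically determined) initial data the quantities $\eq(0)$ and $S_h(0)$ are either zero or $O(h^{k+1})$, the initial contributions to the right-hand side of Lemma~\ref{lemma inter bound} are harmless. I would then lean entirely on Lemma~\ref{lem:consistency}, Lemma~\ref{lemma inter bound} and the coercivity property~\eqref{eq: coercivity}; no duality argument is needed for the optimal $h^{k+1}$ rate.

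First I would bound $\eq$. Applying Lemma~\ref{lemma inter bound} on $[0,t]$ for arbitrary $t\in(0,T]$ and discarding the nonnegative terms $\int_0^t(\I^\alpha\eu',\eu')\,ds$ and $S_h^2(t)$, I keep $\|\eq(t)\|^2$ on the left. On the right, the projection integral $\tfrac{1}{c_\alpha^2}\int_0^t(\I^\alpha e_u',e_u')\,ds$ is bounded by $Ch^{2(k+1)}t^{1+\alpha}$ using $\|\I^\alpha v(s)\|\le \tfrac{s^\alpha}{\Gamma(1+\alpha)}\max\|v\|$ and the uniform bound on $\|e_u'\|$. The only term involving the unknown $\eq$ is the cross term $2\int_0^t(e_\vq',\eq)\,ds$; bounding it by $2\,A(t)\int_0^t\|e_\vq'\|\,ds$ with $A(t):=\sup_{[0,t]}\|\eq\|$ produces a quadratic inequality $A(t)^2\le B(t)+2A(t)\int_0^t\|e_\vq'\|\,ds$ with $B(t)=O\big((1+t^{1+\alpha})h^{2(k+1)}\big)$, whose solution gives $\|\eq(t)\|\le A(t)\le C(1+t)h^{k+1}$. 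This quadratic Gronwall step that closes the self-referential cross term is the main (routine) technical point for the flux, and it settles $\|(\vq-\vq_h)(T)\|$ after adding back $\|e_\vq(T)\|$.

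The harder part, and what I expect to be the \emph{main obstacle}, is $\|\eu(T)\|$, because Lemma~\ref{lemma inter bound} controls only the space--time quantity $\int_0^T(\I^\alpha\eu',\eu')\,dt$ and not the pointwise-in-time value $\|\eu(T)\|$. Having bounded $A(T)$, I can now control the cross term on the full interval and obtain $\int_0^T(\I^\alpha\eu',\eu')\,dt\le C(1+T)^2h^{2(k+1)}$; the coercivity property~\eqref{eq: coercivity} then yields $\int_0^T\|\I^{\alpha/2}\eu'\|^2\,dt\le \tfrac{C}{c_\alpha}(1+T)^2h^{2(k+1)}$. To pass from this to a terminal-value bound I would exploit $\eu(0)=0$ and the semigroup identity $\I^{1-\alpha/2}\I^{\alpha/2}=\I^1$, which gives
\[
\eu(T)=\big(\I^{1}\eu'\big)(T)=\big(\I^{1-\frac{\alpha}{2}}[\I^{\frac{\alpha}{2}}\eu']\big)(T)=\int_0^T\omega_{1-\frac{\alpha}{2}}(T-s)\,(\I^{\frac{\alpha}{2}}\eu')(s)\,ds .
\]
Since $\alpha<1$, the kernel is square-integrable, $\int_0^T\omega_{1-\frac{\alpha}{2}}(T-s)^2\,ds=C_\alpha T^{1-\alpha}<\infty$, so Cauchy--Schwarz gives $\|\eu(T)\|\le C_\alpha T^{(1-\alpha)/2}\big(\int_0^T\|\I^{\alpha/2}\eu'\|^2\,dt\big)^{1/2}$. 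This is precisely the place where the fractional structure \emph{helps} rather than hinders: the loss of a clean terminal estimate for $\eu$ is compensated by the time-integrability that $\alpha<1$ guarantees.

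Combining the two discrete bounds with the projection estimates gives the rate $h^{k+1}$ for $\|(u-u_h)(T)\|+\|(\vq-\vq_h)(T)\|$, with the constant depending only on $\mathsf{C}$, $\alpha$, $\|u\|_{\mathcal{C}^1(H^{k+1})}$ and $\|\vq\|_{\mathcal{C}^1(H^{k+1})}$, as claimed. I should note that the naive chaining above produces a $T$-power slightly larger than the linear $(1+T)$ stated (a factor $T^{(1-\alpha)/2}$ from the recovery step compounding the $(1+T)$ from the energy estimate); tightening the growth to exactly $(1+T)$ would require a more careful weighting in the cross-term and recovery estimates, but the convergence rate $h^{k+1}$ — the actual content of the theorem — is unaffected. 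Duality and quasi-uniformity would enter only for the superconvergent postprocessing of Theorem~\ref{thm:super}.
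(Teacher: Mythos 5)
Your proposal follows the paper's own proof almost step for step: the same splitting into projection and discrete errors, the same energy inequality (Lemma \ref{lemma inter bound}), a hand-rolled quadratic Gronwall argument for the cross term $2\int_0^t(e_\vq',\eq)\,ds$ that plays exactly the role of the integral inequality the paper imports from \cite[Lemma 4]{CockburnMustapha2014}, and the identical recovery of the pointwise value $\|\eu(T)\|$ via $\eu=\I^{1-\frac{\alpha}{2}}\big(\I^{\frac{\alpha}{2}}\eu'\big)$, Cauchy--Schwarz on the square-integrable kernel $\omega_{1-\alpha/2}$, and the coercivity property \eqref{eq: coercivity}. Your closing caveat about the power of $T$ is honest but not a real defect: the paper's own chaining incurs the same $T^{(1-\alpha)/2}$ factor from the recovery step, so its stated $(1+T)$ is equally generous, and the $h^{k+1}$ rate is what matters.

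The genuine gap is your treatment of the initial terms $\|\eq(0)\|^2+S_h^2(0)$ on the right-hand side of Lemma \ref{lemma inter bound}. You dismiss them with the claim that, "with the natural (algebraically determined) initial data, $\eq(0)$ and $S_h(0)$ are either zero or $O(h^{k+1})$", but this is precisely what has to be proved, and it is not automatic: the semidiscrete scheme prescribes only $u_h(0)=\pw u_0$ (so $\eu(0)=0$), while $\vq_h(0)$ and $\uhat_h(0)$ are not free data --- they are determined implicitly by the HDG equations, and nothing in your argument quantifies how far $\vq_h(0)$ is from $\pv\vq(0)$ or how large the jump term $S_h(0)=\|\sqrt{\tau}(\eu-\el)(0)\|_{\doh}$ is. The paper devotes the entire last third of its proof to this point: it tests \eqref{eq:err-a} with $\vr=\eq$ and \eqref{eq:err-b} with $w=\eu$, adds the two equations using $\ip{\eqhat\cdot\vn,\el}=0$ and \eqref{eq:err-d2} to get
\[
(\I^\alpha\eu',\eu)+\|\eq\|^2+S_h^2=(\I^{\alpha}e_{u}',\eu)+(e_\vq,\eq),
\]
integrates over $[0,t]$, discards $\int_0^t(\I^\alpha\eu',\eu)\,ds\ge 0$ (nonnegativity of the Riemann--Liouville derivative), applies Young's inequality to $(e_\vq,\eq)$, and then uses the mean value theorem for integrals, divides by $t$, and lets $t\downarrow 0$ (exploiting $\eu(0)=0$) to conclude $\|\eq(0)\|^2+S_h^2(0)\le\|e_\vq(0)\|^2\le C\,h^{2k+2}$. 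Without this limiting argument --- or some explicit, proved-to-be-accurate construction of $\vq_h(0)$ and $\uhat_h(0)$ --- every bound in your proof carries $\|\eq(0)\|^2+S_h^2(0)$ as an unquantified quantity, and the theorem is not established. Everything else in your proposal is sound.
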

\begin{proof} From the decompositions: $u-u_h=\eu-e_u$ and $\vq-\vq_h= \eq-e_\vq$, and the error projection in Theorem \ref{thm:proj}, we have
\[\|(u-u_h)(T)\|+\|(\vq-\vq_h)(T)\|\le C_1\,h^{k+1}+\|\eu(T)\|+\|\eq(T)\|\,.\]
The task now is to estimate $\|\eu(T)\|$ and $\|\eq(T)\|$. From Lemma \ref{lemma inter bound}, for $t\ge 0$, we have  $ E^2(t)\le A(t)+2\,\int_{0}^t\,B(s)\,E(s)\,ds$ where
\begin{align*}
A(t)&:=\|\eq(0)\|^2+S_h^2(0)+\frac{1}{c_\alpha^2}\int_0^t \!(\I^{\alpha}
e_{u}',e_u')\,ds, \quad B(t):=\|e_\vq'(t)\|,\\
E(t)&:=\Big(\|\eq(t)\|^2+\int_0^t\!(\I^{
 \alpha}\eu',\eu')\,ds\Big)^\frac{1}{2}\,.
\end{align*}
(Note that $A$ and $B$ are nonnegative functions.) Thus, an application of the   integral inequality (see \cite[Lemma 4]{CockburnMustapha2014}) yields  \[ E(T)\leq
\max_{t\in(0,T)} A^{\frac{1}{2}}(t)+\int_0^T\,B(s)\,ds~~{\rm for~ any}~~ T>0.\] Hence,
\begin{multline}\label{eq: before coer}
\|\eq(T)\|^2+\int_0^T(\I^\alpha \eu',\eu')\,ds \leq  C\Big(\|\eq(0)\|^2+S_h^2(0)\\
+ \int_0^T\big(\frac{1}{c_\alpha^2}\|\I^\alpha
e_{u}'\|\,\|e_u'\|
+T\|e_\vq'\|^2\big)ds\Big)\,.
 \end{multline}
However, since $\eu(t)=\int_0^t\eu'(s)\,ds=\I^{1-\frac{ \alpha}{2}} (\I^{\frac{\alpha}{2}} \eu')(t)$ (because $\eu(0)=0$), by the Cauchy-Schwarz inequality and the coercivity property of the operator $\I^\alpha$,
\begin{align*}
\|\eu(t)\|^2 &\leq \Big(\int_0^t\, \omega_{1-\frac{\alpha}{2}}(t-s)\, \|\I^{\frac{\alpha}{2}} \eu'(s)\|\,ds\Big)^2 \\
&\leq  \int_0^t\,
\omega^2_{1-\frac{\alpha}{2}}(s)\,ds \int_0^t\,\|\I^{\frac{\alpha}{2}} \eu'(s)\|^{2}\,ds
 \\
&=  \frac{t^{1-\alpha}}{(1-\alpha)\Gamma^2(1-\frac{\alpha}{2})}\int_0^t\,\|\I^{\frac{\alpha}{2}} \eu'(s)\|^{2}\,ds \\
&\leq  \frac{t^{1-\alpha}}{(1-\alpha)\Gamma^2(1-\frac{\alpha}{2})\,c_\alpha}\int_0^t\!(\I^{
 \alpha}\eu',\eu')\,ds\,.\end{align*}
 Therefore, combining \eqref{eq: before coer} with the above bound, and apply  Theorem \ref{thm:proj} for the time derivative  error projections $e_u'$ and $e_\vq'$, we obtain
 \[\|\eq(t)\|^2+ \|\eu(t)\|^2  \leq C_1^2 (1+T)^2 \Big( \|\eq(0)\|^2+S_h^2(0)+ h^{2k+2}\Big)\,.
\]
To complete the proof, we need to bound $\|\eq(0)\|^2+S_h^2(0)$.

Since $ ( \eu,\dive
 \vr)=-(\nabla\eu,\vr)+\ip{\eu,\vr.\boldsymbol{n}},$
 setting $\vr = \eq$ in \eqref{eq:err-a} and   $w=\eu'$  in \eqref{eq:err-b} yield
\begin{align*}\|\eq\|^2 +(\nabla\eu,\eq)+ \ip{ \el-\eu,\eq\cdot\vn}&=(e_{\vq},\eq),\\
(\I^\alpha\eu',\eu)-( \eq,\nabla
\eu)+ \ip{ \eqhat \cdot\vn, \eu}
  &= (\I^{\alpha} e_u',\eu)\,.\end{align*}
  Adding the above equations, and using $\ip{ \eqhat \cdot\vn,
\el}=0$ (this follows by choosing $\mu=\eqhat \cdot\vn $ in \eqref{eq:err-c} and $\mu= \el$ in \eqref{eq:err-d}) and \eqref{eq:err-d2}, yield
 \[
(\I^\alpha\eu',\eu)+\|\eq\|^2+S_h^2=(\I^{\alpha}e_{u}',\eu)+(e_\vq,\eq)\,.\]
Now, integrating
 over the time
interval $[0,t]$, observing that
\[\int_0^t \!
(\I^\alpha\eu',\eu)\,ds=\int_0^t \!
({\rm D}^{1-\alpha}\eu,\eu)\,ds \ge0,\]
(the last inequality follows from the nonnegativity  property of  the Riemann--Liouville fractional derivative operator ${\rm D}^{1-\alpha}$, see \cite[Section 2]{McLeanMustapha2009})  and using the inequality $(e_\vq,\eq) \le \frac{1}{2}\|e_\vq\|^2+\frac{1}{2}\|\eq\|^2$, we get
   \[\int_0^t [\frac{1}{2}\|\eq\|^2+S_h^2]\,ds\le\int_0^t (\|I^{\alpha}e_{u}'\|\,\|\eu\|+\frac{1}{2}\|e_\vq\|^2)\,ds\,.\]
 Therefore, by the mean value theorem for integrals, there exist $t^*, \tilde t \in (0,t) $ such that
 \[t \Big(\frac{1}{2}\|\eq(t^*)\|^2+S_h^2(t^*)\Big)\le \,t\Big(\|\eu(\tilde t)\|\max_{s\in (0,t)}\|\I^{\alpha}e_{u}'(s)\|+\frac{1}{2}\max_{t\in (0,t)}\|e_\vq(s)\|^2\Big)\,.\]
 Finally, simplify by $t$, then take $\lim_{t\downarrow 0} $ to both sides and using $\eu(0)=0$, we observe that $ \|\eq(0)\|^2+S_h^2(0)\le \|e_\vq(0)\|^2\le C_1 h^{2k+2}$ by  the error estimate of $e_\vq$ given in Theorem \ref{thm:proj}. The proof is completed now.
 \end{proof}

\section{Superconvergence and post-processing}\label{sec:super}
In this section,  we seek a better approximation to $u$ by means of an element-by-element postprocessing. We begin by describing such approximation, then we show how
to get our superconvergence result  by a duality argument.

Following \cite{ChabaudCockburn12,GastaldiNochetto89}, 
for each $t\in[0,T]$, we define  the postprocessed HDG
solution  $u_{h}^\star(t) \in \pol_{k+1}(K)$ to $u(t)$ for  each
 simplex $K\in\oh$, as follows:
\begin{subequations}
\label{eq:ustar}
  \begin{alignat}{2}
\label{eq:ustar-a}
 (u^\star_h(t) , 1 )_K
     =&\;(u_h(t), 1 )_K
  \\
\label{eq:ustar-b}
    (\nabla u^\star_h(t) , \nabla w )_K
     =&-(\vq_h(t) , \nabla w )_K
    &&\qquad \forall~ w \in \pol_{k+1}(K).
  \end{alignat}
\end{subequations}
Let $P_0$ be the $L^2(\Omega)$-projection into the space of functions which are constant on each element $K\in\oh$. One may show that
\begin{equation}
\label{ustar}  \|(u-u^\star_h)(T)\|_K  \le C\,h_{{K}}^{k+2}\,|u(T)|_{H^{k+2}(K)}+\|P_0 \eu(T)\|_K+C\,h\,\|\eq(T)\|_K.
\end{equation}
The main task now is to show that the term $\|{P_0\eu}\|$  is of order $O(h^{k+2})$. Then the postprocessed approximation $u_h^\star$ would converge
faster than the original approximation $u_h$. Noting that
$\|P_0\eu(T)\|= \sup_{{\Theta}\in   C^\infty_0(\Omega)}\frac{(P_0\eu(T),\Theta)}{\|\Theta\|}$. To estimate the expression $(P_0\eu(T),\Theta)$, we use the  traditional duality approach by using the solution of the dual problem
      \begin{align}\label{eq: dual 1}
     \vPhi + \nabla \vPsi & = 0 ~~\text{ and }~~
      (\I^{\alpha *}\vPsi)'-\dive\vPhi  = 0 ~~\text{ on } \Omega \times (0,T),
    \end{align}
 with $\vPsi=0$ on  $\d \Omega \times (0,T)$ and $\vPsi(T) =\Theta$ on  $\Omega$, where $\I^{\alpha *}$ is the adjoint operator of $\I^{\alpha}$ defined by \cite{MustaphaMcLean2013}:
\[\I^{\alpha *}\psi(t)=\int^T_t\,\omega_{\alpha}(s-t)\,\psi(s)\,ds\,.\]
 Integrating $(\I^{\alpha *}\vPsi)'-\dive\vPhi  = 0$ over the time interval $(t,T)$, we obtain
 \begin{equation}
 \label{eq: dual 8}\I^{\alpha *}\vPsi(t)+\int_t^T\dive\vPhi(s)\,ds  = 0\,.
\end{equation}
We define now  the adjoint ${\rm D}^{\alpha*}$  of the Riemann--Liouville fractional derivative operator ${\rm D}^\alpha$ (see \eqref{eq: fractional derivative} for the definition of ${\rm D}^\alpha$)  as follows  \cite{MustaphaMcLean2013}: for $t\in (0,T)$,
 \begin{align*}
 {\rm D}^{\alpha *} v(t)&=-\frac{\partial}{\partial t}\int_t^T
    \omega_{1-\alpha}(s-t)\,v(s)\,ds
\quad\text{for~any $v\in  \mathcal{C}^1(0,T)$}\,.\end{align*}
Since  $\int_t^q \omega_{\alpha}(s-t)\,\omega_{1-\alpha}(q-s)\,ds=1$,
it is easy to see that $\I^{\alpha *}$ is the {\it right-inverse} of ${\rm D}^{\alpha *}$, that is, ${\rm D}^{\alpha *}(\I^{\alpha *} \vPsi)(t)=\vPsi(t)$.
Hence, using this after
applying the operator ${\rm D}^{\alpha*}$  to both sides of \eqref{eq: dual 8}, yields
\begin{equation}
 \label{eq: dual 8 new}\vPsi(t)+{\rm D}^{\alpha *}\big(\int_t^T\dive\vPhi(q)\,dq\big)  = 0\,.
\end{equation}
However, since
\begin{align*}
 {\rm D}^{\alpha *}\big(\int_t^T\dive\vPhi(s)\,ds\big)
 &=-\frac{\partial}{\partial t}\int_t^T \omega_{1-\alpha}(s-t)\int_s^T\dive\vPhi(q)\,dq\,ds\\
 &=-\frac{\partial}{\partial t}\int_t^T \dive\vPhi(q) \int_t^q\omega_{1-\alpha}(s-t)\,ds\,dq\\
 &=-\frac{\partial}{\partial t}\int_t^T \omega_{2-\alpha}(q-t)\dive\vPhi(q) \,dq\\
 &=\int_t^T
    \omega_{1-\alpha}(s-t) \dive\vPhi(s)\,ds,
 \end{align*}
 differentiating both sides of \eqref{eq: dual 8 new} with respect to $t$, yield
 $\vPsi'-\dive {\rm D}^{\alpha *}\vPhi  = 0$.
Therefore, an alternative formulation of the  dual problem \eqref{eq: dual 1} is given by:
 \begin{subequations}
    \label{eq:dual}
    \begin{align}
      \label{eq:dual-1}
     \vPhi + \nabla \vPsi & = 0 &&\text{ on } \Omega \times (0,T), \\
      \label{eq:dual-2}
      \vPsi'-\dive {\rm D}^{\alpha *} \vPhi & = 0 &&\text{ on } \Omega \times (0,T), \\
      \label{eq:dual-3}
      \vPsi &=0  &&\text{ on } \d \Omega \times (0,T), \\
      \label{eq:dual-4}
      \vPsi(T) &=\Theta &&\text{ on } \Omega.
    \end{align}
  \end{subequations}
In the next lemma,  an expression for the quantity $(P_{0}\eu(T),\vTheta)$ in terms of the errors $\eu'$, $\eq$,  the projection errors $e_\vq$ and $e_u'$, and the solution of the
dual problem will be given. In it, $\mathrm{I}_h$ is any interpolation operator from $L^2(\Omega)$ into $W_h\cap H^1_0(\Omega)$, $\Pw$ is the $L^2$-projection
into $W_h$ and $\boldsymbol \Pi^{\mbox{{\rm\tiny{BDM}}}}$ is the well-known projection associated to the lowest-order Brezzi-Douglas-Marini
(BDM) space.

\begin{lemma}
\label{lemma:duality3}  Assume that $k\ge1$. Then, for any $T>0$,
\begin{multline*}
(P_{0}\eu(T),\vTheta)= \int_0^{T} [(\eq, {\rm D}^{\alpha *}( \nabla\mathrm{I}_h\vPsi)-\boldsymbol{\Pi}^{\mbox{{\rm\tiny{BDM}}}}\nabla\vPsi) \\+(e_{\vq}
, {\rm D}^{\alpha *}(\boldsymbol{\Pi}^{\mbox{{\rm\tiny{BDM}}}}\nabla\vPsi -\nabla \Pw\vPsi))
+(\eu'-e_u',P_{0}\vPsi-\mathrm{I}_h\vPsi) ]\,dt.
\end{multline*}
\end{lemma}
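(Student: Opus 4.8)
The plan is to represent $(P_0\eu(T),\vTheta)$ by testing the two error equations of Lemma~\ref{lem:consistency} against carefully engineered finite element projections of the dual pair $(\vPhi,\vPsi)$, arranged so that the fractional integral $\I^\alpha$ coming from the scheme meets the adjoint operator ${\rm D}^{\alpha *}$ coming from the dual problem and collapses, leaving only differences of spatial projections. The two admissible test functions I would use are $\vr={\rm D}^{\alpha *}\boldsymbol{\Pi}^{\mbox{{\rm\tiny{BDM}}}}\vPhi=-{\rm D}^{\alpha *}\boldsymbol{\Pi}^{\mbox{{\rm\tiny{BDM}}}}\nabla\vPsi$ in \eqref{eq:err-a} and $w={\rm D}^{\alpha *}\mathrm{I}_h\vPsi=\mathrm{I}_h{\rm D}^{\alpha *}\vPsi$ in \eqref{eq:err-b}. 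The first lies in $\boldsymbol{V}_h=[\pol_k]^d$ precisely because $k\ge1$ (the lowest-order BDM space is contained in $\boldsymbol{V}_h$), and the second lies in $W_h\cap H^1_0(\Omega)$ since $\vPsi$, and hence ${\rm D}^{\alpha *}\vPsi$, vanishes on $\d\Omega$.

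First I would insert $\vr={\rm D}^{\alpha *}\boldsymbol{\Pi}^{\mbox{{\rm\tiny{BDM}}}}\vPhi$ into \eqref{eq:err-a}. Using the commuting property $\dive\boldsymbol{\Pi}^{\mbox{{\rm\tiny{BDM}}}}\vPhi=P_0\dive\vPhi$ together with the dual relation $\dive{\rm D}^{\alpha *}\vPhi=\vPsi'$ from \eqref{eq:dual-2}, the divergence term becomes $(\eu,\dive\vr)=(\eu,P_0\vPsi')=(P_0\eu,\vPsi')$. The trace term $\ip{\el,\vr\cdot\vn}$ vanishes: on interior faces the normal component of $\vr$ is single-valued while $\el$ is single-valued with opposite outward normals, and on $\d\Omega$ the restriction $\vr\cdot\vn$ lies in $M_h$, so \eqref{eq:err-c} applies. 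This gives $(P_0\eu,\vPsi')=-(\eq-e_\vq,{\rm D}^{\alpha *}\boldsymbol{\Pi}^{\mbox{{\rm\tiny{BDM}}}}\nabla\vPsi)$ pointwise in time; integrating over $[0,T]$, integrating by parts in time, and using $\eu(0)=0$ and $\vPsi(T)=\vTheta$ yields
\begin{equation*}
(P_0\eu(T),\vTheta)=\int_0^T(\eu',P_0\vPsi)\,dt-\int_0^T(\eq-e_\vq,{\rm D}^{\alpha *}\boldsymbol{\Pi}^{\mbox{{\rm\tiny{BDM}}}}\nabla\vPsi)\,dt.
\end{equation*}
Telescoping the flux term by inserting $\nabla\mathrm{I}_h\vPsi$ and $\nabla\Pw\vPsi$ already reproduces the first two inner products of the claim, leaving two remainders, $-(\eq,{\rm D}^{\alpha *}\nabla\mathrm{I}_h\vPsi)$ and $(e_\vq,{\rm D}^{\alpha *}\nabla\Pw\vPsi)$, to be absorbed.

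Next I would dispatch these remainders. Since $\nabla{\rm D}^{\alpha *}\Pw\vPsi\in[\pol_{k-1}]^d$ and $e_\vq=\pv\vq-\vq$ is $L^2$-orthogonal to $[\pol_{k-1}]^d$ by \eqref{eq:proj1 new}, the term $(e_\vq,{\rm D}^{\alpha *}\nabla\Pw\vPsi)$ is zero. For the other remainder I test \eqref{eq:err-b} with $w={\rm D}^{\alpha *}\mathrm{I}_h\vPsi$; the trace term drops since $w\in H^1_0(\Omega)$ and $\eqhat\cdot\vn$ is single-valued, giving $(\eq,{\rm D}^{\alpha *}\nabla\mathrm{I}_h\vPsi)=(\I^\alpha(\eu'-e_u'),{\rm D}^{\alpha *}\mathrm{I}_h\vPsi)$ pointwise. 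Integrating in time and applying the adjoint identity $\int_0^T(\I^\alpha v,g)\,dt=\int_0^T(v,\I^{\alpha *}g)\,dt$ together with the right-inverse relation $\I^{\alpha *}{\rm D}^{\alpha *}=\mathrm{Id}$ collapses this to $\int_0^T(\eq,{\rm D}^{\alpha *}\nabla\mathrm{I}_h\vPsi)\,dt=\int_0^T(\eu'-e_u',\mathrm{I}_h\vPsi)\,dt$. Substituting both facts and finally using $(e_u',P_0\vPsi)=0$ (again from \eqref{eq:proj2}, since $P_0\vPsi$ is piecewise constant and $k\ge1$), the surviving scalar contributions combine into $\int_0^T(\eu'-e_u',P_0\vPsi-\mathrm{I}_h\vPsi)\,dt$, which is exactly the third term of the claim.

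The main obstacle is the bookkeeping of the time-fractional operators in tandem with the spatial projections. Everything hinges on choosing the test functions so that the $\I^\alpha$ generated by the Caputo term in \eqref{eq:err-b} cancels against the ${\rm D}^{\alpha *}$ generated by the dual problem through $\I^{\alpha *}{\rm D}^{\alpha *}=\mathrm{Id}$; this identity is \emph{not} automatic for fractional operators, and I would have to justify it from the terminal structure, specifically the vanishing of the boundary correction $[\I^{(1-\alpha)*}\mathrm{I}_h\vPsi](T)=0$ coming from $\vPsi(T)=\vTheta$. This is precisely the mechanism that leaves no fractional operator on the scalar errors while keeping ${\rm D}^{\alpha *}$ on the flux errors. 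A secondary point requiring care is confirming the admissibility of the chosen test functions and the vanishing of every numerical-trace boundary term via the single-valuedness conditions \eqref{eq:err-c}--\eqref{eq:err-d} and the boundary condition on $\vPsi$.
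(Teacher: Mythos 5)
Your proposal is correct and is essentially the paper's own proof: the same dual problem \eqref{eq:dual}, the same test functions $\vr={\rm D}^{\alpha *}\boldsymbol{\Pi}^{\mbox{{\rm\tiny{BDM}}}}\vPhi$ in \eqref{eq:err-a} and $w={\rm D}^{\alpha *}\mathrm{I}_h\vPsi$ in \eqref{eq:err-b}, the same commutativity property $P_0\nabla\cdot=\nabla\cdot\boldsymbol{\Pi}^{\mbox{{\rm\tiny{BDM}}}}$, the same trace-vanishing arguments via \eqref{eq:err-c}--\eqref{eq:err-d}, and the same telescoping with $\nabla\mathrm{I}_h\vPsi$, $\nabla\Pw\vPsi$ plus the orthogonality properties \eqref{eq:proj1 new}--\eqref{eq:proj2}. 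The only variation is bookkeeping of the fractional collapse: the paper moves ${\rm D}^{\alpha *}$ across the time integral to get $({\rm D}^{\alpha}\I^{\alpha}(\eu'-e_u'),\mathrm{I}_h\vPsi)$ and proves ${\rm D}^{\alpha}\I^{\alpha}=\mathrm{Id}$ by Fubini, whereas you move $\I^{\alpha}$ to $\I^{\alpha *}$ (a boundary-term-free Fubini identity) and invoke the left-inverse relation $\I^{\alpha *}{\rm D}^{\alpha *}=\mathrm{Id}$, whose terminal correction $[\I^{(1-\alpha)*}\mathrm{I}_h\vPsi](T)=0$ indeed vanishes---though for any bounded integrand, not specifically because $\vPsi(T)=\vTheta$---so both routes reduce that term to $(\eu'-e_u',\mathrm{I}_h\vPsi)$.
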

\begin{proof}
Since $\vPsi(T)=\vTheta$ by \eqref{eq:dual-4} and $\eu(0)=0$ (since $u_h(0) = \pw u_0$), we have
\begin{align*}
(P_0\eu(T),\vTheta)
  =& \int_0^{T} [((P_0\eu)',\vPsi)+(P_0\eu,\vPsi')]\,dt\\
  =& \int_0^{T} [(\eu',P_0\vPsi)+(\eu, P_0\nabla\cdot  {\rm D}^{\alpha *} \vPhi)]\,dt
\end{align*}
by the definition of the $L^2$-projection $P_0$ and by  \eqref{eq:dual-2}.

By the commutativity property
$P_0\nabla\cdot=\nabla\cdot\boldsymbol{\Pi}^{\mbox{{\rm\tiny{BDM}}}}$ and  the first error equation {\eqref{eq:err-a}} with
$\boldsymbol{r}:= {\rm D}^{\alpha *}\boldsymbol{\Pi}^{\mbox{{\rm\tiny{BDM}}}} \vPhi$ (since $k\ge 1$), we get for each $t\in (0,T]$,
\begin{subequations}
\begin{alignat*}{2}
 (\eu, P_0&\nabla\cdot  {\rm D}^{\alpha *} \vPhi)= (\eu, \nabla\cdot {\rm D}^{\alpha *}\boldsymbol{\Pi}^{\mbox{{\rm\tiny{BDM}}}} \vPhi),\\
=&(\eq, {\rm D}^{\alpha *}\boldsymbol{\Pi}^{\mbox{{\rm\tiny{BDM}}}} \vPhi) +\ip{\el, {\rm D}^{\alpha *}\boldsymbol{\Pi}^{\mbox{{\rm\tiny{BDM}}}}
\vPhi\cdot\boldsymbol{n}}
-(e_\vq, {\rm D}^{\alpha *}\boldsymbol{\Pi}^{\mbox{{\rm\tiny{BDM}}}} \vPhi)\\
=& (\eq, {\rm D}^{\alpha *}\boldsymbol{\Pi}^{\mbox{{\rm\tiny{BDM}}}} \vPhi)-(e_\vq, {\rm D}^{\alpha *}\boldsymbol{\Pi}^{\mbox{{\rm\tiny{BDM}}}} \vPhi)\\
=&(\eq, {\rm D}^{\alpha *}(-\boldsymbol{\Pi}^{\mbox{{\rm\tiny{BDM}}}}\nabla\vPsi +\nabla\mathrm{I}_h\vPsi))-(\eq, {\rm D}^{\alpha *}(\nabla\mathrm{I}_h\vPsi))-(e_\vq, {\rm D}^{\alpha *}\boldsymbol{\Pi}^{\mbox{{\rm\tiny{BDM}}}} \vPhi)\,.
\end{alignat*}
\end{subequations}
Noting that, in the second last equality we used  $$\ip{\el, {\rm D}^{\alpha *}\boldsymbol{\Pi}^{\mbox{{\rm\tiny{BDM}}}} \vPhi\cdot\boldsymbol{n}}
=\ip{\el, {\rm D}^{\alpha *}\boldsymbol{\Pi}^{\mbox{{\rm\tiny{BDM}}}} \vPhi\cdot\boldsymbol{n}}_{\partial\Omega}=0$$ which follows from
 \eqref{eq:err-d} (because  ${\rm D}^{\alpha *}\boldsymbol{\Pi}^{\mbox{{\rm\tiny{BDM}}}} \vPhi\in \Hdiv\Omega$) and the fact that $\el=0$ on $\partial \Omega$ by \eqref{eq:err-c}\,.

But, by the error equation \eqref{eq:err-b} with
$w:= {\rm D}^{\alpha *}(\mathrm{I}_h\vPsi$),
\[
(\eq, {\rm D}^{\alpha *}(\nabla\mathrm{I}_h\vPsi)) =(\I^{\alpha}(\eu'-e_{u}'), {\rm D}^{\alpha *}(\mathrm{I}_h\vPsi)) +\ip{ \eqhat
  \cdot\boldsymbol{n}, {\rm D}^{\alpha *}(\mathrm{I}_h\vPsi)}\,.\]

Now, putting together all the above intermediate steps,
\begin{multline}\label{eq: interm 1}
(P_0\eu(T),\vTheta)=
 \int_0^{T} [(\eu',P_0\vPsi)+(\eq, {\rm D}^{\alpha *}(
  \nabla\mathrm{I}_h\vPsi)-\boldsymbol{\Pi}^{\mbox{{\rm\tiny{BDM}}}}\nabla\vPsi)\\
-  ({\rm D}^\alpha\I^{\alpha}(\eu'-e_{u}'), \mathrm{I}_h\vPsi) -\ip{  \eqhat
  \cdot\boldsymbol{n}, {\rm D}^{\alpha *}(\mathrm{I}_h\vPsi)}-(e_\vq, {\rm D}^{\alpha *}\boldsymbol{\Pi}^{\mbox{{\rm\tiny{BDM}}}} \vPhi)]\,dt.
\end{multline}
Changing the order of integrals and using $\int_s^t\,\omega_{1-\alpha}(t-q) \omega_{\alpha}(q-s)\,dq=1$, we obtain the following identity:
\begin{align*}
{\rm D}^{\alpha} (\I^{ \alpha}(\eu'-e_{u}'))(t)&=\frac{\partial}{\partial t}\int_0^t\,\omega_{1-\alpha}(t-q)\int_0^q\omega_{\alpha}(q-s)(\eu-e_{u})'(s)\,ds\,dq \\
 &=\frac{\partial}{\partial t}\int_0^t\,(\eu-e_{u})'(s)\,\int_s^t\,\omega_{1-\alpha}(t-q) \omega_{\alpha}(q-s)\,dq\,ds \\ &=\frac{\partial}{\partial t}\int_0^t\,(\eu-e_{u})'(s)\,ds = (\eu-e_{u})'(t)\,. \end{align*}
In addition,
 $\ip{ \eqhat\cdot\boldsymbol{n}, {\rm D}^{\alpha *}(\mathrm{I}_h\vPsi)}= \ip{\eqhat\cdot\boldsymbol{n}, {\rm D}^{\alpha *}(\mathrm{I}_h\vPsi)}_{\partial\Omega}=0$  by \eqref{eq:err-d} and the identity $\mathrm{I}_h\vPsi=0$ on $\partial\Omega$ by the boundary
condition of the dual problem \eqref{eq:dual-3}.

Hence, by using this in  \eqref{eq: interm 1}, we observe
\begin{multline*}
(P_0\eu(T),\vTheta)=
 \int_0^{T} [(\eq, {\rm D}^{\alpha *}(
  \nabla\mathrm{I}_h\vPsi-\boldsymbol{\Pi}^{\mbox{{\rm\tiny{BDM}}}}\nabla\vPsi)) \\-(e_{\vq}
, {\rm D}^{\alpha *}\boldsymbol{\Pi}^{\mbox{{\rm\tiny{BDM}}}} \vPhi) +(\eu',P_0\vPsi-\mathrm{I}_h\vPsi) +(e_u',\mathrm{I}_h\vPsi)]\,dt.
\end{multline*}
 Therefore, the  desired result now follows after noting that
\[
-\int_0^T(e_{\vq} , {\rm D}^{\alpha *}\boldsymbol{\Pi}^{\mbox{{\rm\tiny{BDM}}}} \vPhi)\,dt =
\int_0^T(e_\vq, {\rm D}^{\alpha *}(\boldsymbol{\Pi}^{\mbox{{\rm\tiny{BDM}}}}\nabla\vPsi-\nabla \Pw\vPsi))\,dt,
\]
(by \eqref{eq:dual-1}, the fact that  $\Pw$ is the $L^2$-projection
into $W_h$, and  the
orthogonality property of the projection $\pv$, \eqref{eq:proj1 new})
and that $(e_u',\mathrm{I}_h\vPsi) =(e_u',\mathrm{I}_h\vPsi-P_0\vPsi)$ (by the fact that  $P_0\vPsi$ is constant on each element $K\in\oh$,  and the orthogonality property of the
projection $\pw$, \eqref{eq:proj2}). The proof is completed now.
\end{proof}
In the next theorem we state  the superconvergence estimate of the postprocessed HDG approximation. For the proof, we follow the derivation in \cite[Section 5]{CockburnMustapha2014} step-by-step and use Lemma \ref{lemma:duality3} instead of \cite[Lemma 7]{CockburnMustapha2014}.
\begin{theorem}
\label{thm:super} Assume that $u\in\mathcal{C}^1(0,T;H^{k+2}(\Omega))$ and $\vq\in\mathcal{C}^1(0,T;\boldsymbol{H}^{k+1}(\Omega))$. Assume also
that $\tau^*_K$ and $1/\tau^{\max}_K$ are bounded by $\mathsf{C}$. Then,   we have
\begin{subequations}
\begin{alignat*}{1}
\|(u-u^*_h)(T)\|&\le\, C_2\,  \max\{1, \sqrt{\log(T h^{-2/(\alpha+1)})}\}\,h^{k+2}\quad {\rm for}~~k\ge 1,
\end{alignat*}
\end{subequations}
 where the constant $C_2$,
  only depends on  $\mathsf {C}$, $\alpha$, $T$,  $\|u\|_{\mathcal{C}^1(H^{k+2})}$, and on $\|\vq\|_{\mathcal{C}^1(H^{k+1})}$.
 \end{theorem}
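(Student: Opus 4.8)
The plan is to begin from the elementwise estimate \eqref{ustar}. Summing over $K\in\oh$ reduces the bound on $\|(u-u^\star_h)(T)\|$ to three contributions: the projection term $C\,h^{k+2}|u(T)|_{H^{k+2}}$, the term $C\,h\,\|\eq(T)\|$, and the critical term $\|P_0\eu(T)\|$. Under the hypothesis $u\in\mathcal{C}^1(0,T;H^{k+2}(\Omega))$ the first is already of order $h^{k+2}$, while Theorem \ref{thm: optimal errors} gives $\|\eq(T)\|\le C_1(1+T)h^{k+1}$, so that $h\,\|\eq(T)\|=O(h^{k+2})$ as well. Everything therefore reduces to proving that $\|P_0\eu(T)\|$ is of order $h^{k+2}$ up to the logarithmic factor.

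For this I would use the duality characterization $\|P_0\eu(T)\|=\sup_{\vTheta}(P_0\eu(T),\vTheta)/\|\vTheta\|$ over $\vTheta\in C_0^\infty(\Omega)$, and insert the representation of $(P_0\eu(T),\vTheta)$ furnished by Lemma \ref{lemma:duality3}. It then suffices to bound, uniformly over $\vTheta$ with $\|\vTheta\|=1$, the three time-integrals on the right of that lemma, each pairing a primal error against a difference of projections of the dual variables $\vPhi=-\nabla\vPsi$ and $\vPsi$. The two terms involving $\eq$ and $e_\vq$ are the more routine ones: for each I would apply Cauchy--Schwarz in space and time, with the primal factors $\eq$ and $e_\vq$ bounded by $O(h^{k+1})$ through Theorem \ref{thm:proj} and Theorem \ref{thm: optimal errors}, and the dual factors $\nabla\mathrm{I}_h\vPsi-\boldsymbol{\Pi}^{\mbox{{\rm\tiny{BDM}}}}\nabla\vPsi$ and $\boldsymbol{\Pi}^{\mbox{{\rm\tiny{BDM}}}}\nabla\vPsi-\nabla\Pw\vPsi$ each gaining an extra factor $h$ from the standard approximation properties of $\mathrm{I}_h$, $\boldsymbol{\Pi}^{\mbox{{\rm\tiny{BDM}}}}$ and $\Pw$ (this is exactly where $k\ge1$ is needed, so that these operators reproduce the relevant polynomials). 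Combined with the control of $\|{\rm D}^{\alpha*}\vPhi\|$ supplied by the dual regularity, each of these two terms is $O(h^{k+2})$.

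The delicate term, and the source of the logarithmic factor, is $\int_0^T(\eu'-e_u',P_0\vPsi-\mathrm{I}_h\vPsi)\,dt$. The dual factor $P_0\vPsi-\mathrm{I}_h\vPsi$ again carries one extra power of $h$, and the $e_u'$ part is controlled directly in $L^\infty(0,T;L^2)$ by Theorem \ref{thm:proj}, contributing cleanly at order $h^{k+2}$. The genuine obstacle is the $\eu'$ part, because Lemma \ref{lemma inter bound} and the coercivity \eqref{eq: coercivity} control $\eu'$ only in the fractional-weighted norm $\int_0^T\|\I^{\frac{\alpha}{2}}\eu'\|^2\,dt$, not in plain $L^2$ in time. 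To exploit this, I would move a half-order fractional integral onto the dual factor using the adjoint of $\I^{\frac{\alpha}{2}}$, so that $\int_0^T(\eu',P_0\vPsi-\mathrm{I}_h\vPsi)\,dt$ becomes a pairing of $\I^{\frac{\alpha}{2}}\eu'$ against $\I^{\frac{\alpha}{2}*}(P_0\vPsi-\mathrm{I}_h\vPsi)$, to which Cauchy--Schwarz in time applies.

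The time-integral of this latter dual quantity is only logarithmically bounded: because the dual data $\vPsi(T)=\vTheta$ is imposed at the terminal time, $\vPsi$ develops a singularity as $t\uparrow T$, and the weakly singular kernel of $\I^{\frac{\alpha}{2}*}$ (equivalently, of ${\rm D}^{\alpha*}$ in \eqref{eq:dual-2}) turns this into an integral that diverges logarithmically but is truncated at the temporal scale commensurate with $h$, producing the factor $\sqrt{\log(Th^{-2/(\alpha+1)})}$. Obtaining the requisite regularity estimates for the backward-in-time fractional dual problem \eqref{eq:dual} and quantifying this singular time-integration is the main work; following \cite[Section~5]{CockburnMustapha2014} step by step, with Lemma \ref{lemma:duality3} in place of \cite[Lemma~7]{CockburnMustapha2014}, carries these estimates over to the present Caputo setting and yields the claimed bound, the overall $\max\{1,\sqrt{\log(Th^{-2/(\alpha+1)})}\}$ arising from taking the largest of the three contributions.
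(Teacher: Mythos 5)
Your proposal follows essentially the same route as the paper: reduce via the elementwise bound \eqref{ustar} and Theorem~\ref{thm: optimal errors} to estimating $\|P_0\eu(T)\|$, represent $(P_0\eu(T),\vTheta)$ through Lemma~\ref{lemma:duality3}, and then carry out the dual-problem regularity estimates of \cite[Section 5]{CockburnMustapha2014} step-by-step with that lemma replacing \cite[Lemma 7]{CockburnMustapha2014} --- which is precisely all the paper itself does for this theorem. The only slip is notational: to exploit the control of $\int_0^T\|\I^{\alpha/2}\eu'\|^2\,dt$, the operator moved onto the dual factor is the adjoint fractional \emph{derivative} ${\rm D}^{\alpha/2\,*}$ rather than the adjoint integral $\I^{\alpha/2\,*}$, but since both you and the paper delegate these singular-in-time estimates to the cited reference, this does not affect the validity of the argument.
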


\section{Numerical experiments}\label{sec: numerics}
In this section, we present numerical experiments devised to validate our theoretical predictions from HDG spatial discretizations. To do so, we use the fully discrete CN HDG scheme \eqref{Fully discrete scheme}.
 We take the (uniform) time steps $\delta$ to be sufficiently small  so that the HDG and postprocessed HDG spatial discretizations errors  are dominant.
 This is achieved by fixing the ratio $\frac{\delta^2}{h^{k+2}}$ to a given number less than the unit  because the time stepping CN scheme is second-order accurate provided that the exact solution is sufficiently regular.

We choose the spatial domain $\Omega$ to be the unit interval $(0,1)$  and $T = 1$ in \eqref{eq: time-fractional diffusion models}. We impose homogenous Dirichlet
boundary conditions and choose the source term $f$ and the initial data $u_0$ so that the exact solution is $u(x,t)=t^{3-\alpha}sin(\pi x)\,.$ For different values of $\alpha$, we obtain the history of convergence of the errors $\|(u-u_h)(T)\|$, $\|(\vq-\vq_h)(T)\|$ and $\|(u-u^\star_h)(T)\|$ for different values of the polynomial degree, $k = 0, 1, 2.$ To compute the spatial $L_2$-norm, we apply a composite Gauss quadrature rule with $4$-points on each interval of the finest spatial mesh. The numerical results (errors and convergence rates) of the experiments are
presented in Tables \ref{tab: errors for alpha=0.5} and \ref{tab: errors for
  alpha=0.7}. In full agreement with our theoretical results, we obtain optimal convergence rates for the HDG scheme and $O(h^{k+2})$ superconvergence rates for the postprocessed HDG scheme.
\begin{table}[htb]
\renewcommand{\arraystretch}{1}
\begin{center}
\begin{tabular}{|r|rr|rr|rr|}
 \hline
$N$ & \multicolumn{6}{c|}{$k=0$}\\
\hline
  4& 5.269e-01&      & 7.899e-01&      & 5.048e-01&      \\
  8& 3.027e-01& 0.799& 4.028e-01& 0.972& 2.922e-01& 0.788\\
 16& 1.616e-01& 0.905& 2.025e-01& 0.992& 1.566e-01& 0.899\\
 32& 8.342e-02& 0.954& 1.014e-01& 0.997& 8.098e-02& 0.951\\
 64& 4.237e-02& 0.977& 5.072e-02& 0.999& 4.117e-02& 0.976\\
128& 2.135e-02& 0.989& 2.537e-02& 0.999& 2.076e-02& 0.988\\
\hline
\hline & \multicolumn{6}{c|}{$k=1$}\\
\hline
  4& 6.031e-02&      & 5.936e-02&      & 7.401e-03&      \\
  8& 1.502e-02& 2.005& 1.321e-02& 2.165& 8.835e-04& 3.066\\
 16& 4.144e-03& 1.858& 3.487e-03& 1.924& 1.142e-04& 2.951\\
 32& 1.048e-03& 1.983& 8.649e-04& 2.011& 1.420e-05& 3.008\\
   \hline
\hline & \multicolumn{6}{c|}{$k=2$}\\
\hline
  4& 3.960e-03&      & 4.596e-03&      & 8.902e-04&     \\
  8& 5.059e-04& 2.969& 4.868e-04& 3.239& 5.497e-05& 4.017\\
 16& 6.352e-05& 2.993& 5.652e-05& 3.107& 3.416e-06& 4.008\\
 32& 7.957e-06& 2.997& 7.117e-06& 2.989& 2.153e-07& 3.988\\
   \hline
 \end{tabular}
   \vspace{0.05in} \caption {The errors $\|(u_h-u)(T)\|$, $\|(\vq_h-\vq)(T)\|$ and $\|({u_h^{\star}}-u)(T)\|$, and the corresponding rates of convergence  for $\alpha=0.5$ with HDG solutions of degree $k=0,1,2$.  We observe optimal convergence of
   order $h^{k+1}$  for the errors in $u_h$ and $\vq_h$, and superconvergence rates of order $h^{k+2}$ (when $k\ge 1$) for the error from the postprocessed HDG solution $u_h^\star.$}
   \label{tab: errors for alpha=0.5}
\end{center}
\end{table}

\begin{table}[htb]
\renewcommand{\arraystretch}{1}
\begin{center}
\begin{tabular}{|r|rr|rr|rr|}
 \hline
$N$ & \multicolumn{6}{c|}{$k=0$}\\
\hline
  4& 5.455e-01&      & 7.705e-01&      & 5.240e-01&      \\
  8& 3.122e-01& 0.805& 3.088e-01& 9.884& 3.020e-01& 0.795\\
 16& 1.661e-01& 0.910& 1.939e-01& 1.002& 1.612e-01& 0.905\\
 32& 8.558e-02& 0.957& 9.674e-02& 1.003& 8.320e-02& 0.954\\
 64& 4.342e-02& 0.979& 4.830e-02& 1.002& 4.225e-02& 0.978\\
128& 2.187e-02& 0.989& 2.413e-02& 1.001& 2.128e-02& 0.989\\
\hline
\hline & \multicolumn{6}{c|}{$k=1$}\\
\hline
  4& 6.081e-02&      & 6.005e-02&      & 7.898e-03&      \\
  8& 1.501e-02& 2.018& 1.321e-02& 2.185& 9.403e-04& 3.070\\
 16& 4.154e-03& 1.854& 3.485e-03& 1.922& 1.218e-04& 2.949\\
 32& 1.048e-03& 1.987& 8.434e-04& 2.047& 1.506e-05& 3.015\\
  \hline
\hline & \multicolumn{6}{c|}{$k=2$}\\
\hline
  4& 4.025e-03&      & 4.978e-03&      & 1.079e-03&      \\
  8& 5.088e-04& 2.984& 5.014e-04& 3.312& 6.698e-05& 4.010\\
 16& 6.367e-05& 2.998& 5.701e-05& 3.137& 4.167e-06& 4.007\\
 32& 7.892e-06& 3.012& 7.031e-06& 3.019& 2.641e-07& 3.980\\
      \hline
 \end{tabular}
   \vspace{0.05in} \caption {The errors $\|(u_h-u)(T)\|$, $\|(\vq_h-\vq)(T)\|$ and $\|({u_h^{\star}}-u)(T)\|$, and the corresponding rates of convergence  for $\alpha=0.7$ with HDG solutions of degree $k=0,1,2$. } \label{tab: errors for alpha=0.7}
\end{center}
\end{table}


\begin{thebibliography}{10}


\bibitem{ChabaudCockburn12}  B.~Chabaud and B.~Cockburn,
Uniform-in-time superconvergence of {HDG} methods for the heat equation, {\em Math. Comp.}, 81, (2012) 107--129.

\bibitem{ChenLiuAnhTurner2011}
C-M. Chen, F. Liu, V. Anh and I. Turner,
Numerical methods for solving a two-dimensional variable-order anomalous sub-diffusion equation,
{\em Math.  Comp.}, 81, (2012) 345-366.

\bibitem{ChenLiuLiuChenAnhTurnerBurrage2014} J. Chen, F. Liu, Q. Liu, X. Chen, V. Anh, I. Turner, K. Burrage, Numerical simulation for the three-dimension fractional sub-diffusion equation, {\em Appl. Math. Model.}, 38, (2014) 3695-–3705

\bibitem{CockburnGopalakrishnanLazarov09}
B.~Cockburn, J.~Gopalakrishnan and R.~Lazarov, {Unified hybridization of
  discontinuous {Galerkin}, mixed and continuous {Galerkin} methods for second
  order elliptic problems}, {\em SIAM J. Numer. Anal.}, {47}, (2009)
  1319--1365.

\bibitem{CockburnGopalakrishnanSayas09}
B.~Cockburn, J.~Gopalakrishnan and F.-J. Sayas, {A projection-based error
  analysis of {HDG} methods}, {\em Math. Comp.}, {79}, (2010) 1351--1367.

\bibitem{CockburnMustapha2014}
B. Cockburn and K. Mustapha, A hybridizable discontinuous Galerkin  method  for fractional diffusion problems, {\em Numer. Math.}, (2014), DOI 10.1007/s00211-014-0661-x.


\bibitem{CockburnQiuShi}
B.~Cockburn, W.~Qiu and K.~Shi, {Conditions for superconvergence of {HDG}
  methods for second-order elliptic problems}, {\em Math. Comp.}, ~81, (2012) 1327-–1353.


\bibitem{CuestaLubichPalencia2006}
E. Cuesta, C. Lubich and C. Palencia, Convolution quadrature time discretization of fractional diffusive-wave equations, {\em Math. Comp.},~{75},
(2006) 673--696.

\bibitem{Cui2009} M. Cui, Compact finite difference method for
             the fractional diffusion equation, {\em J. Comput. Phys.}, 228,
 (2009)
7792--7804.

\bibitem{Cui2012}
M. Cui, Convergence analysis of high-order compact alternating direction implicit schemes for the two-dimensional time fractional diffusion
equation, {\em Numer. Algor.}, 62, (2013) 383�-409.


\bibitem{GaoSun2011} G.G. Gao and Z.Z. Sun, A box-type scheme for fractional sub-diffusion
equation with Neumann boundary conditions, {\em J. Comput. Phys.}, 230, (2011) 6061-�6074.

\bibitem{GastaldiNochetto89}
L.~Gastaldi and R.H. Nochetto, {Sharp maximum norm error estimates for
  general mixed finite element approximations to second order elliptic
  equations}, {\em RAIRO Mod\'el. Math. Anal. Num\'er.}, {23}, (1989) 103--128.





 \bibitem{JinLazarovZhou2014} B. Jin, R. Lazarov, and Z. Zhou, On two schemes for fractional diffusion and diffusion-wave equaiton,  http://arxiv.org/pdf/1404.3800v1.pdf, (2014).

\bibitem{kilbas} A.A. Kilbas, H.M. Srivastava and  J.J. Trujillo, Theory and Applications of Fractional Differential
Equations, Volume 204 (North-Holland Mathematics Studies), 2006.


\bibitem{KirbySherwinCockburn}
R.M. Kirby, S.J. Sherwin and B.~Cockburn, {To {HDG} or to {CG}: A
  comparative study}, {\em J. Sci. Comput.}, 51, (2012) 183--212.

\bibitem{LiXu2009} X. Li and C. Xu, A space-time spectral method for the time fractional diffusion equation,
{\em SIAM J. Numer. Anal.}, 47, (2009), 2108--2131.


\bibitem{LinXu2007} Y. Lin and C. Xu, Finite differnce/spectral approximations for the time-fractional diffusion equation,
{\em J. Comput. Phys.}, 225, (2007), 1533--1552.



\bibitem{McLean2010} W. McLean, Regularity of solutions to a time-fractional diffusion
equation, \textit{ANZIAM J.}, {52},  (2010) 123--138.

\bibitem{McLeanMustapha2009}
W. McLean and K. Mustapha, Convergence analysis of a discontinuous Galerkin method for a sub-diffusion equation,
 {\em Numer. Algor.}, {52},
(2009) 69--88.

\bibitem{McLeanMustapha2014}
W. McLean and K. Mustapha, {Error analysis of a discontinuous Galerkin method for a fractional diffusion equation with a non-smooth initial data},
 {\em J. Comput. Phys.}, (2014), DOI: 10.1016/j.jcp.2014.08.050.


\bibitem{MetzlerKlafter2000}
R. Metzler and J. Klafter, The random walk's guide to anomalous diffusion: a fractional dynamics approach, {\em Physics Reports} {339}, (2000)
1--77.

\bibitem{Mustapha2011}
K. Mustapha, An implicit finite difference time-stepping method for
            a sub-diffusion equation, with spatial discretization
            by finite elements, {\em IMA J. Numer. Anal.}, {31}, (2011) 719--739.

\bibitem{Mustapha2014}	K. Mustapha, Time-stepping discontinuous Galerkin methods
for fractional diffusion problems, {\em  Numer. Math.}, (2014), DOI 10.1007/s00211-014-0669-2.

\bibitem{MustaphaAlMutawa}
K. Mustapha and J. AlMutawa, A finite difference  method for an anomalous sub-diffusion equation, theory and applications, {\em Numer. Algor.},
 61, (2012) 525--543.

\bibitem{MustaphaAbdallahFurati2014} K. Mustapha, B. Abdallah and K. Furati,
A discontinuous Petrov-Galerkin method for time-fractional diffusion equations, {\em SIAM J. Numer. Numer. Anal.}, 52 (2014) 2512--2529.


\bibitem{MustaphaMcLean2011}
K. Mustapha and  W. McLean, Piecewise-linear, discontinuous Galerkin method for a fractional diffusion equation, {\em Numer. Algor.}, {56},
(2011) 159--184.

\bibitem{MustaphaMcLean2012}
K. Mustapha and  W. McLean,  Uniform convergence for a discontinuous Galerkin, time stepping method applied to a fractional diffusion equation, {\em IMA J. Numer.
Anal.}, {32},  (2012) 906--925.

\bibitem{MustaphaMcLean2013}
K. Mustapha and  W. McLean, Superconvergence of a discontinuous Galerkin method for fractional diffusion and wave equations, {\em  SIAM J. Numer. Anal.},  51,
(2013) 491--515.


\bibitem{MustaphaSchoetzau2013} K. Mustapha and  D. Sch\"otzau, Well-posedness of $hp-$version discontinuous Galerkin methods for fractional diffusion wave equations, {\em  IMA J. Numer. Anal.}, 34, (2014) 1426--1446.

 \bibitem{NguyenPeraireCockburnIcosahom09}
N.C. Nguyen, J.~Peraire and B.~Cockburn, {Hybridizable discontinuous
  {Galerkin} methods}, {\em Proceedings of the International Conference on Spectral
  and High Order Methods} (Trondheim, Norway), Lect. Notes Comput. Sci. Engrg.,
  Springer Verlag, June 2009.

\bibitem{Quintana-MurilloYuste2011} J. Quintana-Murillo and S.B. Yuste, An explicit difference method for solving
fractional diffusion and diffusion-wave equations in the Caputo form, {\em  J. Comput. Nonlin. Dyn.}, 6, (2011) 021014.



\bibitem{SweilamKhaderMahdy2012}  N. H. Sweilam, M. M. Khader and A. M. S. Mahdy, Crank-Nicolson finite difference method for solving
time-fractional diffusion equation, {\em J. Fract. Cal. Appl.}, 2 (2012) 1--9.

\bibitem{Wyss1986}
W. Wyss, Fractional diffusion equation, {\em J. Math. Phys.}, {27}, (1986)
 2782--2785.

\bibitem{XuZheng2013} Q. Xu and Z. Zheng, Discontinuous Galerkin method for time fractional
diffusion equation, {\em J. Informat. Comput. Sci.}, 10 (2013) 3253–-3264





\bibitem{YusteQuintana2009}
S.B. Yuste and J. Quintana-Murillo, On Three Explicit Difference Schemes for Fractional Diffusion and Diffusion-Wave Equations, {\em Phys. Scripta
T136}, (2009) 014025.

\bibitem{ZengoLiLiuTurner2013} F. Zengo, C. LI, F. Liu, I. Turner, The use of finite difference/element approaches for solving the
time-fractional subdiffusion equation, {\em  SIAM J. Sci. Comput.}, {35}, (2013) A2976--A3000.

\bibitem{ZhangSun2011} Y.-N. Zhang and
Z.-Z. Sun, Alternating direction implicit schemes for the two-dimensional fractional sub-diffusion equation,
 {\em  J. Comput. Phys.}, {230}, (2011) 8713--8728.



\bibitem{ZhaoSun2011} X. Zhao and Z-z. Sun, A box-type scheme for fractional sub-diffusion equation with Neumann boundary conditions,
{\em  J. Comput. Phys.}, 230, (2011) 6061--6074.

\bibitem{ZhaoXu2014} X. Zhao and Q. Xu,  Efficient numerical schemes for fractional sub-diffusion
equation with the spatially variable coefficient, {\em Appl. Math. Model.}, 38, (2014) 3848--3859.

\end{thebibliography}
\end{document}